%The beginning of your latex file should read exactly as follows: 

\documentclass[10pt]{article}
\textwidth= 5.00in
\textheight= 7.4in
\topmargin = 30pt
\evensidemargin=0pt
\oddsidemargin=55pt
\headsep=17pt
\parskip=.5pt
\parindent=12pt
\font\smallit=cmti10

\usepackage{amsmath}
\usepackage{amssymb}
\usepackage{amsfonts}
\usepackage{graphicx}
\usepackage{amsthm}
\usepackage{setspace}
\usepackage{url}
\usepackage{verbatim}
\usepackage[square,numbers,sort&compress]{natbib}

%define Theorem Environment
\newtheorem{theorem}{Theorem}
\newtheorem{lemma}{Lemma}[theorem]
\newtheorem{proposition}{Proposition}
\newtheorem{corollary}{Corollary}
\theoremstyle{definition}
\newtheorem{mydef}{Definition}

\makeatletter

\renewcommand\section{\@startsection {section}{1}{\z@}
{-30pt \@plus -1ex \@minus -.2ex}
{2.3ex \@plus.2ex}
{\normalfont\normalsize\bfseries}}

\renewcommand\subsection{\@startsection{subsection}{2}{\z@}
{-3.25ex\@plus -1ex \@minus -.2ex}
{1.5ex \@plus .2ex}
{\normalfont\normalsize\bfseries}}

\renewcommand{\@seccntformat}[1]{\csname the#1\endcsname. }

\makeatother

\begin{document}

%Next comes your title and author list information in the following construct: 

\begin{center}
\uppercase{\bf Generalized Nonaveraging Integer Sequences}
\vskip 20pt
{\bf Dennis Tseng}\\
{\smallit Massachusetts Institute of Technology, Cambridge, MA 02139, United States}\\
{\tt dtseng@mit.edu}\\ 
%\vskip 10pt
%{\bf Author Two\footnote{any footnote here}}\\
%{\smallit Department of Mathematics, University of Two, Anywhere AAAAA, Country}\\
%{\tt you@math.two.edu}\\ (optional)
\end{center}
\vskip 30pt

%\centerline{\smallit Received: , Revised: , Accepted: , Published: } % We will fill in the dates
\vskip 30pt 

\centerline{\bf Abstract}

\noindent
Let the sequence $S_{m}$ of nonnegative integers be generated by the following conditions: Set the first term $a_{0}=0$, and for all $k\geq 0$, let $a_{k+1}$ be the least integer greater than $a_{k}$ such that no element of $\{a_{0},\ldots,a_{k+1}\}$ is the average of $m-1$ \emph{distinct} other elements. Szekeres gave a closed-form description of $S_{3}$ in 1936, and Layman provided a similar
description for $S_{4}$ in 1999. We first find closed forms for some similar greedy sequences that avoid averages in terms not \emph{all the same}. Then, we extend the
closed-form description of $S_{m}$ from the known cases when $m=3$
and $m=4$ to any integer $m\geq 3$. With the help of a computer, we also generalize this to sequences that avoid solutions to specific weighted averages in distinct terms. Finally, from the closed forms of these sequences, we find bounds for their growth rates. 
%Put your abstract here. Please limit it to half of a page of text.

\pagestyle{myheadings}
%\markright{\smalltt INTEGERS: 10 (2010)\hfill}
\thispagestyle{empty}
\baselineskip=12.875pt
\setcounter{page}{1} %%%%%%%%%%% WE WILL ADJUST PAGE NUMBER
\vskip 30pt 

%Your paper starts here with your first section. 

\section{Introduction} % Edit introduction after everything else is changed 
Often in combinatorial number theory, we wish to find the maximum number of integers that can be chosen from $\{0,1,\ldots,n-1\}$ without creating a solution to some linear equation in the chosen integers. Ruzsa initiated a systematic study of this problem over all linear equations \citep{Ruzsa93, Ruzsa95}, and the problem has also been extended to systems of linear equations \citep{Shapira06,Koester08}. A couple well-studied examples include constructing sets of integers without three-term arithmetic progressions, which corresponds to avoiding solutions to $x_{1}+x_{2}-2x_{3}=0$, and constructing Sidon sets, which are defined by having no nontrivial solutions to $x_{1}+x_{2}-x_{3}-x_{4}=0$. One way to approach this problem is through the use of a greedy algorithm.

Given an integer $m\geq 3$, define the sequence $S_{m}$
of nonnegative integers by the following conditions:

(i) $a_{0}=0$

(ii) Having chosen $a_{0},a_{1},\ldots,a_{k}$, let $a_{k+1}$ be the
least integer greater than $a_{k}$ such that there are no
\emph{distinct}
$x_{1},x_{2},\ldots,x_{m}\in\{a_{0},a_{1},\ldots,a_{k+1}\}$ with
\begin{equation*}
  x_1+\cdots+x_{m-1}=(m-1)x_{m}.\\
\end{equation*}
The sequence $S_{m}$ constructs a sequence of integers that avoids solutions to $x_{1}+\cdots+x_{m-1}=(m-1)x_{m}$ using a greedy algorithm. Generating $S_{3}$, which avoids three-term arithmetic progressions, we
obtain

$0,1,3,4,9,10,12,13,27,28,30,31,36,37,39,40,81\ldots$

There is an alternative definition for $S_{3}$. An integer is in
$S_{3}$ if and only if there is no 2 in its representation in base
3. This follows from a more general result, as Erd\H{o}s and Tur\'{a}n
\citep{Erdos36} wrote that Szekeres showed the use of the
greedy algorithm to avoid $m$-term arithmetic progressions, for $m$
prime, results in a sequence that contains the integers that do not
contain the digit $m-1$ when expressed in base $m$.

The nice closed-form
description suggests that we can extend this to more general
averages. The sequence $S_{4}$ has a similar closed-form
description as $S_{3}$. The following theorem is due to Layman
\citep{Layman98}.
\begin{theorem}
	\label{Layman}
  An integer is in $S_{4}$ if and only if it can be
  written in the form $M+r$, where the base 4 representation of $M$ has
  only 3's and 0's and ends with a 0, and $r$ is any integer from 0 to 4
  inclusive.
\end{theorem}

Extending this generalization will form the basis of the rest of our
investigation. In Section \ref{section: variant}, we present the
closed forms of some related sequences that avoid solutions to weighted averages in terms \emph{not all the same}.
Then in Section \ref{section: mainresult}, we prove a result that can be used to find
the closed forms of $S_{m}$ for all $m\geq 3$ and the closed forms of
sequences that avoid solutions to specific weighted averages. Finally
in Section \ref{section: asymptotics}, given the closed forms, we can derive bounds 
that allows us to show how efficient the greedy algorithm is asymptotically.
\subsection{Definitions}

We make some definitions to simplify the notation for the rest of the
paper. Unless otherwise stated, for an ordered tuple
$E=(d_{1},\ldots,d_{m-1})$, we will assume throughout the paper that
$1\leq d_{1}\leq d_{2}\ldots\leq d_{m-1}$, i.e. the components are
arranged in nondecreasing order. Let the ordered tuple
$E_{m}=(1,1,\ldots,1)$, where there are $m-1$ components in the tuple.

\begin{mydef} 
  Given an ordered tuple $E=(d_{1},\ldots,d_{m-1})$, let
  $d(E)=d_{1}+\cdots+d_{m-1}$. When the choice of $E$ is obvious, we
  will simply denote $d(E)$ as $d$.
\end{mydef}

\begin{mydef} 
  Call an ordered tuple of positive integers
  $E=(d_{1},\ldots,d_{m-1})$ \emph{valid} if and only if the following
  conditions are satisfied:

  (i) $1=d_{1}$.

  (ii) $d_{2}\leq d_{1},d_{3}\leq d_{1}+d_{2},\ldots,d_{m-1}\leq
  d_{1}+\ldots+d_{m-2}$.\\ In particular, this implies that
  $d_{1}=d_{2}=1$.
\end{mydef}

For example $E_{3}=(1,1)$. Also, $E_{m}$, for all $m\geq 3$, and
$(1,1,2,4,8)$ are valid ordered tuples, while $(1,1,3)$ is not a valid
ordered tuple.

\subsection{Definition of Sequences} 

In this paper, we will focus on finding closed forms for the following
sequences.

\begin{mydef} 
  Given an ordered tuple $E=(d_{1},\ldots,d_{m-1})$, define the
  sequence $A_{E}$ of nonnegative integers by the following
  conditions:

  (i) $a_{0}=0$

  (ii) Having chosen $a_{0},a_{1},\ldots,a_{k}$, let $a_{k+1}$ be the
  least integer greater than $a_{k}$ such that there are no terms
  $x_{1},x_{2},\ldots,x_{m}\in\{a_{0},\ldots,a_{k+1}\}$, \emph{not all
    the same}, that satisfy $d_{1}x_{1}+\cdots+d_{m-1}x_{m-1}=dx_{m}$.
\end{mydef}

\begin{mydef} 
  Given an ordered tuple $E=(d_{1},\ldots,d_{m-1})$, define the
  sequence $S_{E}$ of nonnegative integers by the following
  conditions:

  (i) $a_{0}=0$

  (ii) Having chosen $a_{0},a_{1},\ldots,a_{k}$, let $a_{k+1}$ be the
  least integer greater than $a_{k}$ such that there are no
  \emph{distinct} terms
  $x_{1},x_{2},\ldots,x_{m}\in\{a_{0},\ldots,a_{k+1}\}$ that satisfy
  $d_{1}x_{1}+\cdots+d_{m-1}x_{m-1}=dx_{m}$.\\
\end{mydef}

To simplify notation, we will refer to the sequences $S_{E_{m}}$ and 
$A_{E_{m}}$ for integer $m\geq 3$ as simply $S_{m}$ and $A_{m}$ respectively.

\section{Analysis of the Sequences $A_{E}$}\label{section: variant}

\subsection{A Property of Valid Ordered Tuples} 

We will prove a property of valid ordered tuples that we will use
throughout the paper.

\begin{proposition}\label{prop: valid}
  An ordered tuple $E=(d_{1},\ldots,d_{m-1})$ is valid if and only if
  for every integer $0\leq j\leq d-1$, there exists a subset $H_{j}$
  of $\{2,\ldots,m-1\}$ such that $\displaystyle\sum_{k\in
    H_{j}}{d_{k}}=j$.
\end{proposition}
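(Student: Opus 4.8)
The plan is to read the subset-sum condition as the familiar statement that the subset sums of the multiset $\{d_{2},\dots,d_{m-1}\}$ fill a complete interval of integers, and to exploit the fact that under either hypothesis one has $d_{1}=1$, so that $d-1=d_{2}+\cdots+d_{m-1}$ is exactly the maximum possible subset sum of $\{d_{2},\dots,d_{m-1}\}$. Thus the asserted condition ``every $j$ with $0\le j\le d-1$ equals $\sum_{k\in H_{j}}d_{k}$ for some $H_{j}\subseteq\{2,\dots,m-1\}$'' says precisely that the set of subset sums of $\{d_{2},\dots,d_{m-1}\}$ is $\{0,1,\dots,d_{2}+\cdots+d_{m-1}\}$. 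I would prove the two implications separately by elementary arguments. Throughout, for $2\le r\le m-1$ write $D_{r}=d_{2}+\cdots+d_{r}$.

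For the forward direction, assume $E$ is valid. Since $d_{1}=1$ and $d_{2}\le d_{1}$, validity forces $d_{2}=1$. I would then show by induction on $r$ that every integer in $\{0,\dots,D_{r}\}$ is a subset sum of $\{d_{2},\dots,d_{r}\}$; the base case $r=2$ is immediate because the subset sums of $\{d_{2}\}=\{1\}$ are $0$ and $1$. For the inductive step, given a target $j$ with $0\le j\le D_{r+1}=D_{r}+d_{r+1}$: if $j\le D_{r}$ the inductive hypothesis already supplies a suitable $H\subseteq\{2,\dots,r\}$; if $j>D_{r}$, then from $d_{r+1}\le d_{1}+\cdots+d_{r}=1+D_{r}$ we get both $d_{r+1}\le j$ and $j-d_{r+1}\le D_{r}$, so $j-d_{r+1}\in\{0,\dots,D_{r}\}$ is a subset sum of $\{d_{2},\dots,d_{r}\}$ by induction and we adjoin the index $r+1$. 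Taking $r=m-1$ and using $D_{m-1}=d-1$ gives the claim.

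For the reverse direction I would argue by contraposition: assume every $j\in\{0,\dots,d-1\}$ is a subset sum of $\{d_{2},\dots,d_{m-1}\}$ and deduce that $E$ is valid. Representing $j=1$ forces a single index $k\ge 2$ with $d_{k}=1$, so $1\le d_{1}\le d_{2}\le d_{k}=1$ gives $d_{1}=d_{2}=1$; hence condition (i) holds, as does the first inequality $d_{2}\le d_{1}$ of (ii). If some inequality of (ii) failed, let $k\in\{3,\dots,m-1\}$ be minimal with $d_{k}>d_{1}+\cdots+d_{k-1}=:D$; then $0<D<d_{1}+\cdots+d_{m-1}=d$, so $D\le d-1$ and by hypothesis $D=\sum_{l\in H}d_{l}$ for some $H\subseteq\{2,\dots,m-1\}$. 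Every index $l\ge k$ has $d_{l}\ge d_{k}>D$, so $H$ contains no such index, i.e.\ $H\subseteq\{2,\dots,k-1\}$, forcing $D=\sum_{l\in H}d_{l}\le d_{2}+\cdots+d_{k-1}=D-d_{1}=D-1$, a contradiction. Hence all inequalities of (ii) hold and $E$ is valid.

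The whole argument is routine; the one place needing a moment of care is checking in the reverse direction that the witness value $D=d_{1}+\cdots+d_{k-1}$ genuinely lies in the range $\{0,\dots,d-1\}$ to which the hypothesis applies, which is exactly where the restriction $k\le m-1$ is used. I do not anticipate any real obstacle beyond keeping this bookkeeping straight.
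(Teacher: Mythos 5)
Your argument is correct and follows essentially the same route as the paper: the forward direction is the same induction that adjoins the largest remaining weight $d_{r+1}$ using $d_{r+1}\le d_{1}+\cdots+d_{r}$, and the reverse direction extracts $d_{1}=d_{2}=1$ from the representability of $1$ and then derives a contradiction from a minimal violated inequality by exhibiting an unrepresentable value (you use $d_{1}+\cdots+d_{k-1}$ where the paper uses $d_{k}-1$, an immaterial difference). The only nit is that your reverse direction is a direct proof with an embedded contradiction, not a contraposition as you label it.
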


\begin{proof} 
  We will show that, given a valid ordered tuple
  $E=(d_{1},\ldots,d_{m-1})$, there exists a subset
  $H_{j}\subset\{2,\ldots,m-1\}$ for every integer $0\leq j\leq d-1$
  by induction. For the base case, $H_{0}=\{\}$ and
  $H_{1}=\{2\}$. Now, assume that for some integer $3\leq l\leq m-1$,
  we have found a subset $H_{j}$ of $\{2,\ldots,l-1\}$ for all $0\leq
  j\leq \displaystyle\sum_{k=2}^{l-1}{d_{k}}$. Let $j$ be an integer
  with $1+\displaystyle\sum_{k=2}^{l-1}{d_{k}}\leq j\leq\displaystyle\sum_{k=2}^{l}{d_{k}}$.

  Then, let $H_{j}=H_{j-d_{l}}\cup\{l\}$. Since $d_{l}\leq\displaystyle\sum_{k=1}^{l-1}{d_{k}}\leq j\leq\displaystyle\sum_{k=2}^{l}{d_{k}}$, $0\leq
  j-d_{l}\leq \displaystyle\sum_{k=2}^{l-1}{d_{k}}$ and $H_{j-d_{l}}$
  must exist by induction. Our induction is complete.

  Now to prove the other direction, let $E=(d_{1},\ldots,d_{m-1})$ be
  any ordered tuple of positive integers such that for every $0\leq
  j\leq d-1$, there exists a subset $H_{j}$ of $\{2,\ldots,m-1\}$ such
  that $\displaystyle\sum_{k\in H_{j}}{d_{k}}=j$. In order for $H_{1}$
  to exist, $d_{2}=1$, which means $d_{1}=1$. Now, assume for the sake
  of contradiction that there is some integer $3\leq l\leq m-1$ such
  that $d_{l}>\displaystyle\sum_{k=1}^{l-1}{d_{k}}$. Then, we cannot
  create the subset $H_{d_{l}-1}$, because the subset $H_{d_{l}-1}$
  cannot contain any integers greater than $l-1$, or else
  $\displaystyle\sum_{k\in H_{d_{l}-1}}{d_{k}}>d_{l}-1$. Also, by
  assumption, $\displaystyle\sum_{k=2}^{l-1}{d_{k}}<d_{l}-1$, so the
  subset $H_{d_{l}-1}$ cannot contain only integers less than or equal
  to $l-1$, which is a contradiction.
\end{proof}

\subsection{Closed Form of $A_{E}$}
\begin{theorem}\label{variantTheorem}
  Given a valid ordered tuple $E$, an integer is in $A_{E}$ if and
  only if it contains only 0's and 1's in its base $d+1$
  representation.
\end{theorem}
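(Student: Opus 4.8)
The plan is to identify $A_{E}$ with the set $B$ of nonnegative integers whose base $d+1$ representation uses only the digits $0$ and $1$, by proving by strong induction on $N$ that $A_{E}\cap\{0,\ldots,N\}=B\cap\{0,\ldots,N\}$. Granting the inductive hypothesis, it follows from the greedy definition that $N\in A_{E}$ if and only if $\bigl(B\cap\{0,\ldots,N-1\}\bigr)\cup\{N\}$ still contains no $x_{1},\ldots,x_{m}$, not all equal, with $d_{1}x_{1}+\cdots+d_{m-1}x_{m-1}=dx_{m}$. So the inductive step reduces to two statements: (a) $B$ contains no such configuration, and (b) if $N\notin B$ then $\bigl(B\cap\{0,\ldots,N-1\}\bigr)\cup\{N\}$ does contain one.

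Statement (a) is a carry-free digit computation. Writing $x_{i}=\sum_{j}x_{i}^{(j)}(d+1)^{j}$ with every $x_{i}^{(j)}\in\{0,1\}$, the coefficient of $(d+1)^{j}$ in $d_{1}x_{1}+\cdots+d_{m-1}x_{m-1}$ is $\sum_{i=1}^{m-1}d_{i}x_{i}^{(j)}\le d_{1}+\cdots+d_{m-1}=d<d+1$, and the coefficient of $(d+1)^{j}$ in $dx_{m}$ is $dx_{m}^{(j)}\in\{0,d\}$; in both cases there are no carries, so we may compare digits. Equality forces $\sum_{i=1}^{m-1}d_{i}x_{i}^{(j)}=dx_{m}^{(j)}$ for every $j$, and since each $d_{i}\ge1$ this gives $x_{1}^{(j)}=\cdots=x_{m-1}^{(j)}=x_{m}^{(j)}$ (all $0$, or all $1$). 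Hence $x_{1}=\cdots=x_{m}$, contradicting ``not all the same.''

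Statement (b) is the crux, and it is where Proposition \ref{prop: valid} does the work. Write $N=\sum_{j}n_{j}(d+1)^{j}$ with $n_{j}\in\{0,\ldots,d\}$; since $N\notin B$, some $n_{j}\ge2$. Put $x_{1}=N$ and $x_{m}=z:=\sum_{j:\,n_{j}\neq0}(d+1)^{j}\in B$, and set $y_{j}:=d-n_{j}$ when $n_{j}\ge1$ and $y_{j}:=0$ when $n_{j}=0$, so $y_{j}\in\{0,\ldots,d-1\}$. By Proposition \ref{prop: valid} there is a subset $H_{j}\subseteq\{2,\ldots,m-1\}$ with $\sum_{k\in H_{j}}d_{k}=y_{j}$; defining $x_{i}:=\sum_{j:\,i\in H_{j}}(d+1)^{j}$ for $2\le i\le m-1$ (so each $x_{i}\in B$) makes $\sum_{i=2}^{m-1}d_{i}x_{i}=\sum_{j}y_{j}(d+1)^{j}=:Y$. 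Since $n_{j}+y_{j}$ equals $d$ when $n_{j}\ge1$ and $0$ otherwise, there are again no carries and $N+Y=\sum_{j:\,n_{j}\ge1}d(d+1)^{j}=dz$, so $d_{1}x_{1}+\cdots+d_{m-1}x_{m-1}=N+Y=dx_{m}$; and $x_{1}=N\neq z=x_{m}$ (proved below), so the tuple is not all equal.

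What remains — and what I expect to be the only delicate bookkeeping — is to confirm that every term except $x_{1}=N$ lies in $B\cap\{0,\ldots,N-1\}$. Comparing digits, $N-z=\sum_{j:\,n_{j}\ge1}(n_{j}-1)(d+1)^{j}>0$ because some $n_{j}\ge2$, so $z<N$. For $2\le i\le m-1$, membership $i\in H_{j}$ forces $y_{j}\ge d_{i}\ge1$, hence $x_{i}\le\sum_{j:\,y_{j}\neq0}(d+1)^{j}=\sum_{j:\,1\le n_{j}\le d-1}(d+1)^{j}$; this last number is $\le N$ digitwise, and is strictly smaller than $N$ since at any position with $n_{j}\ge2$ its digit is at most $1$. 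Combining (a), (b), and the base case $N=0$, the induction yields $A_{E}=B$, which is the theorem.
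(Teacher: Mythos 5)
Your proposal is correct and follows essentially the same route as the paper: your statement (a) is the paper's carry-free digit argument (Lemma \ref{variation Lemma 1}), and your statement (b) is exactly the paper's insertion argument (Lemma \ref{variation Lemma 2}), using Proposition \ref{prop: valid} to pick subsets $H_{d-n_j}$ and the divide-by-$d$ target $x_m=z$ in the same way. The strong-induction wrapper is just a more explicit packaging of the paper's ``no internal solutions, and nothing can be inserted'' conclusion, and your extra bookkeeping that $x_2,\dots,x_m<N$ matches the paper's digitwise comparison.
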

\begin{proof}
  Let the sequence $B_{E}$ be the nonnegative integers with only 0's
  and 1's in their base $d+1$ representation in increasing
  order. We show that $B_{E}$ is the same as $A_{E}$. Let
  $E=(d_{1},\ldots,d_{m-1})$.
  \begin{lemma}
    \label{variation Lemma 1} It is impossible to choose $m$ integers
    $x_{1},x_{2},\ldots,x_{m}$, not all the same, that are terms of the
    sequence $B_{E}$ such that
    \begin{equation}
      \label{eq: variation lemma 1 condition}
      d_{1}x_{1}+\cdots+d_{m-1}x_{m-1}=dx_{m}.
    \end{equation}
  \end{lemma}
  \begin{proof} Assume for the sake of contradiction that there are
    $x_{1},\ldots,x_{m}$, not all equal, that satisfy equation \eqref{eq:
      variation lemma 1 condition}. Let $t_{0,k},t_{1,k},\ldots$ be the
    digits of $x_{k}$ in base $d+1$,
    i.e. $x_{k}=\displaystyle\sum_{i=0}^{\infty}{t_{i,k}(d+1)^i}$ for all
    $1\leq k\leq m$. From equation \eqref{eq: variation lemma 1
      condition},
    \begin{equation}
      \sum_{k=1}^{m-1}\sum_{i=0}^{\infty}{d_kt_{i,k}(d+1)^i}=d\sum_{i=0}^{\infty}{t_{i,m}(d+1)^i}.\nonumber
    \end{equation} There is no carrying in base $d+1$ when we add
    $\displaystyle\sum_{k=1}^{m-1}{d_{k}x_{k}}$ because $x_{k}$ contains
    only 0's and 1's in its base $d+1$ representation for all $1\leq k\leq
    m$ and $\displaystyle\sum_{k=1}^{m-1}{d_{k}}<d+1$. Therefore, if
    $t_{i,m}=0$, then $t_{i,k}=0$ for all $1\leq k\leq m-1$. If
    $t_{i,m}=1$, then $t_{i,k}=1$ for all $1\leq k\leq m-1$. But then
    $x_{1}=x_{2}=\ldots=x_{m}$ contradicting the condition that
    $x_{1},x_{2},\ldots,x_{m}$ cannot all be the same.
  \end{proof} Now we show it is impossible to insert terms into
  $B_{E}$, which means $B_{E}$ satisfies the ``greedy'' condition of
  $A_{E}$.
  \begin{lemma}
    \label{variation Lemma 2} Given any integer $x_{1}$ that is not in
    $B_{E}$, we can find terms $x_{2},x_{3},\ldots,x_{m}$ of $B_{E}$, each
    less than $x_{1}$ such that $d_{1}x_{1}+d_{2}x_{2}+\cdots+d_{m-1}x_{m-1}=dx_{m}$.
  \end{lemma}
  \begin{proof} Since $E$ is a valid ordered tuple, by Proposition
    \ref{prop: valid}, for every $0\leq j\leq d-1$, there exists a set
    $H_{j}\subset\{2,\ldots,m-1\}$ such that $\displaystyle\sum_{k\in
      H_{j}}{d_{k}}=j$. Let $t_{0,k},t_{1,k},\ldots$ be the digits of
    $x_{k}$ in base $d+1$,
    i.e. $x_{k}=\displaystyle\sum_{i=0}^{\infty}{t_{i,k}(d+1)^i}$ for all
    $1\leq k\leq m$. For every $i\geq 0$, if $t_{i,1}=0$, then let
    $t_{i,k}=0$ for all $2\leq k\leq m-1$. If $t_{i,1}>0$, let $t_{i,k}=1$
    for all $k\in H_{d-t_{i,1}}$ and $t_{i,k}=0$ for all
    $k\notin H_{d-t_{i,1}}$ so that
    $\displaystyle\sum_{k=1}^{m-1}{d_{k}t_{i,k}}=d$.  Then,
    the sum $\displaystyle\sum_{k=1}^{m-1}{d_{k}x_{k}}$ has only 0's and
    $d$'s when written in base $d+1$. When we
    divide the sum $\displaystyle\sum_{k=1}^{m-1}{d_{k}x_{k}}$ by
    $d$, we obtain an integer that has only 0's and 1's
    when written in base $d+1$, which is in $B_{E}$. Note that
    $t_{i,1}$ must be greater than 1 for some $i=i_{0}$ as $x_{1}$ is not
    a term of $B_{E}$. Then, $t_{i_{0},1}>t_{i_{0},k}$ for all $2\leq
    k\leq m$. Since $t_{i,1}\geq t_{i,k}$ for all $2\leq k\leq m$ and
    $i\geq 0$, $x_{1}>x_{2},\ldots,x_{m}$ as desired.
  \end{proof} Since we have proven no $m$ terms in $B_{E}$ satisfy the
  equation
  $d_{1}x_{1}+\cdots+d_{m-1}x_{m-1}=dx_{m}$
  and no terms can be inserted into $B_{E}$ without creating a solution
  to the equation, $B_{E}$ is the same sequence as $A_{E}$.
\end{proof}

\subsection{A Property of the Sequence $A_{E}$} By
Theorem~\ref{variantTheorem}, the term $a_{n}$ of $A_{E}$ can be found
by writing $n$ in binary and reading it in base
$d+1$. Then, the following result quickly follows.

\begin{proposition}
  \label{prop1} The number of 1's in the base 2 representation of $n$
  is congruent modulo $d$ to the $n^{th}$ term of
  $A_{E}$.
\end{proposition}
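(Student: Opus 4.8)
The plan is to leverage Theorem~\ref{variantTheorem} directly, which tells us that the $n^{th}$ term $a_n$ of $A_E$ is obtained by writing $n$ in base $2$ and reading the resulting string of $0$'s and $1$'s as a base $d+1$ numeral. So if $n = \sum_{i} \epsilon_i 2^i$ with each $\epsilon_i \in \{0,1\}$, then $a_n = \sum_i \epsilon_i (d+1)^i$. The number of $1$'s in the base $2$ representation of $n$ is exactly $\sum_i \epsilon_i$, so the claim to prove is that $\sum_i \epsilon_i (d+1)^i \equiv \sum_i \epsilon_i \pmod{d}$.

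First I would observe that $d+1 \equiv 1 \pmod d$, hence $(d+1)^i \equiv 1^i = 1 \pmod d$ for every $i \geq 0$. Then I would sum: $a_n = \sum_i \epsilon_i (d+1)^i \equiv \sum_i \epsilon_i \cdot 1 = \sum_i \epsilon_i \pmod d$. Since $\sum_i \epsilon_i$ is the number of $1$'s in the binary representation of $n$, this is exactly the asserted congruence. The argument is essentially the familiar ``casting out nines'' trick, with $9$ replaced by $d$.

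There is no real obstacle here; the only thing worth being careful about is the reduction to the closed form, i.e.\ making explicit that Theorem~\ref{variantTheorem} identifies $a_n$ with the base-$(d+1)$ value of the binary digits of $n$ (this is stated in the sentence immediately preceding the proposition, so it can be cited). One might also remark that the result is vacuous or trivial when $d=1$, but since $E$ is assumed valid we have $d = d_1 + \cdots + d_{m-1} \geq 2$, so the modulus is genuine. I would keep the proof to two or three lines invoking only $(d+1)^i \equiv 1 \pmod d$ and the explicit description of $a_n$.
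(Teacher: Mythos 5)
Your proof is correct and is essentially identical to the paper's: both write $n=\sum_i t_i 2^i$, use Theorem~\ref{variantTheorem} to identify $a_n=\sum_i t_i(d+1)^i$, and reduce modulo $d$ using $(d+1)^i\equiv 1\pmod d$. No differences worth noting.
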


\begin{proof} Write $n=\displaystyle\sum_{i=0}^{\infty}{t_{i}2^{i}}$, with
  $t_{0},t_{1},\ldots$ as its digits in base 2. Then,
  $a_{n}=\displaystyle\sum_{i=0}^{\infty}{t_{i}(d+1)^i}\equiv
  \displaystyle\sum_{i=0}^{\infty}{t_{i}}\pmod{d}$.
\end{proof}

\begin{corollary} The terms of $A_{3}$ modulo 2 is the \emph{Thue-Morse sequence}, where the
  $n^{th}$ term is a 0 if $n$ has an even number of 1's in its binary
  expansion and a 1 otherwise by Proposition 1 in \citep{Allouche99}.\end{corollary}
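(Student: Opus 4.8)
The plan is to specialize Proposition~\ref{prop1} to the tuple $E_3=(1,1)$, for which $d=d(E_3)=d_1+d_2=1+1=2$. Proposition~\ref{prop1} then immediately yields that the $n$th term $a_n$ of $A_3$ satisfies $a_n\equiv s_2(n)\pmod{2}$, where $s_2(n)$ denotes the number of $1$'s in the base $2$ representation of $n$. Equivalently, $a_n\bmod 2$ equals $0$ when $n$ has an even number of $1$'s in binary and $1$ otherwise. So the first step is just this one-line substitution into the already-proved proposition.

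The second step is to identify the parity sequence $\bigl(s_2(n)\bmod 2\bigr)_{n\geq 0}$ with the Thue--Morse sequence. This is one of the standard equivalent definitions of Thue--Morse, and its equivalence with the other common definitions (for instance, the fixed point of the substitution $0\mapsto 01$, $1\mapsto 10$) is exactly the content of Proposition~1 in \citep{Allouche99}. Chaining the two observations gives the stated characterization of the terms of $A_3$ modulo $2$.

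There is essentially no obstacle: the corollary is a direct consequence of Proposition~\ref{prop1} (with $d=2$) combined with the cited characterization of the Thue--Morse sequence. The only routine point to verify is the arithmetic $d(E_3)=2$, which is immediate from the definition of $d(E)$, so that Proposition~\ref{prop1} is being applied with the correct modulus.
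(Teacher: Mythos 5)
Your proposal is correct and matches the paper's reasoning exactly: the corollary is stated as an immediate consequence of Proposition~\ref{prop1} with $E=E_{3}=(1,1)$, hence $d=2$, combined with the cited characterization of the Thue--Morse sequence. No further comment is needed.
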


\section{Analysis of the Sequences $S_{E}$}
\label{section: mainresult} We first give an alternative way to
represent the nonnegative integers.  \setcounter{lemma}{2}
\begin{proposition}
  \label{Lemma 1 0} Given positive integers $M\geq 2$ and $c$, every
  nonnegative integer $x$ can be expressed in the form
  $x=c\displaystyle\sum_{i=0}^{\infty} {t_{i}M^{i}}+r$ in exactly one
  way, with integer $0\leq r<c$ and sequence $t_{0},t_{1},\ldots$ such
  that $t_{i}\in\{0,\ldots,M-1\}$ for all $i\geq 0$.
\end{proposition}

\begin{proof} Given a positive integer $x$, let $r_{0}$ and $m_{0}$ be
  the remainder and quotient when $x$ is divided by $c$. So
  $x=r_{0}+cm_{0}$ and $r_{0}$ and $m_{0}$ are uniquely defined. Then
  $r=r_{0}$, and the digits of $m_{0}$ in base $M$ is the sequence
  $t_{0},t_{1},\ldots$, which also must be uniquely defined.
\end{proof}

We now present our main result, which can be used to find closed forms
of the sequences $S_{E}$ for specific choices of $E$.

\begin{theorem}
  \label{conditionTheorem} For some positive integer $z$ and some
  sequence $S_{E}$ for valid ordered tuple $E$, let the set $R_{E}$ be
  $\{a_{0},\ldots,a_{z}\}$ and the constant $c_{E}=a_{z+1}$. \\ Let
  $\max(R_{E})$ denote the maximum element $a_{z}$. Suppose the
  following conditions (i) and (ii) are satisfied:

  (i)
  $c_{E}=1+d\max(R_{E})-\displaystyle\sum_{k=2}^{m-1}{d_{k}(m-k-1)}$.

  (ii) For every integer $0\leq r_{1}\leq c_{E}-1$ and every integer
  $0\leq j\leq d-2$, there exists a subset $H_{j}$ of
  $\{2,\ldots,m-1\}$ and terms $r_{2},\ldots,r_{m}\in R_{E}$ such that
  $\displaystyle\displaystyle\sum_{k\in H_{j}}d_{k}=j$,
  $\displaystyle\sum_{k=1}^{m-1}{d_{k}r_{k}}=dr_{m}$,
  all elements of $\{r_{k}:k\in H_{j}\}\cup\{r_{m}\}$ are distinct, and
  all elements of $\{r_{k}:k\notin H_{j}\cup\{1,m\}\}$ are distinct.\\
  Then all terms in the sequence $S_{E}$ can be expressed in the form
  \begin{equation}
    \label{eq: system form}
    c_{E}\displaystyle\sum_{i=0}^{\infty}{t_{i}(d+1)^i}+r,
  \end{equation} such that $t_{i}=0$ or 1 for all $i$ and $r\in
  R_{E}$.
\end{theorem}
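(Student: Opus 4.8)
The plan is to imitate the proof of Theorem~\ref{variantTheorem}. Let $T_{E}$ be the strictly increasing sequence of nonnegative integers whose terms are exactly those expressible as in \eqref{eq: system form}: $c_{E}\sum_{i\ge 0}t_{i}(d+1)^{i}+r$ with each $t_{i}\in\{0,1\}$ and $r\in R_{E}$. Because $\max(R_{E})=a_{z}<a_{z+1}=c_{E}$, Proposition~\ref{Lemma 1 0} (applied with $M=d+1$, $c=c_{E}$) shows that this representation of a term of $T_{E}$ is unique, that the terms of $T_{E}$ below $c_{E}$ are exactly the elements of $R_{E}$, and that the next term is $c_{E}$; hence the first $z+2$ terms of $T_{E}$ are $a_{0},\dots,a_{z+1}$. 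So it suffices to prove $T_{E}=S_{E}$, and this follows by induction on the index --- as in any greedy argument --- from two claims: \textbf{(A)} no $m$ \emph{distinct} terms $x_{1},\dots,x_{m}$ of $T_{E}$ satisfy $d_{1}x_{1}+\cdots+d_{m-1}x_{m-1}=dx_{m}$; and \textbf{(B)} for every nonnegative integer $w\notin T_{E}$ there are $m-1$ pairwise distinct terms of $T_{E}$, each less than $w$, which together with $w$ solve $d_{1}x_{1}+\cdots+d_{m-1}x_{m-1}=dx_{m}$. Claim (A) says the greedy algorithm never skips a term of $T_{E}$, claim (B) says it never adopts a non-term, and with the agreement on the first $z+2$ terms these force $S_{E}=T_{E}$.

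To prove (A), write $x_{k}=c_{E}y_{k}+r_{k}$ with $y_{k}=\sum_{i}t_{i,k}(d+1)^{i}$, $t_{i,k}\in\{0,1\}$, $r_{k}\in R_{E}$. Since $d_{1}+\cdots+d_{m-1}=d<d+1$ there is no carrying in base $d+1$ in the sum $\sum_{k=1}^{m-1}d_{k}y_{k}$, so the equation separates into $c_{E}\Delta=dr_{m}-\sum_{k=1}^{m-1}d_{k}r_{k}$ with $\Delta:=\sum_{k=1}^{m-1}d_{k}y_{k}-dy_{m}$. The right-hand side lies in $[-d\max(R_{E}),\,d\max(R_{E})]$, and condition (i), rewritten as $d\max(R_{E})=c_{E}-1+\sum_{k=2}^{m-1}d_{k}(m-k-1)$, keeps $|\Delta|$ in a short range about $0$ (for $m=3$ the last sum vanishes, so the right-hand side is smaller than $c_{E}$ in absolute value and $\Delta=0$ outright; note $c_{E}=2\max(R_{E})+1$ here, whereas $c_{E}=2\max(R_{E})$ would place the progression $c_{E}y,\,c_{E}y+\max(R_{E}),\,c_{E}(y+1)$ inside $T_{E}$). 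When $\Delta=0$ we also obtain $\sum_{k=1}^{m-1}d_{k}r_{k}=dr_{m}$, and comparing base-$(d+1)$ digits exactly as in Lemma~\ref{variation Lemma 1} ($\sum_{k}d_{k}t_{i,k}=d\,t_{i,m}$ for each $i$ forces $t_{i,1}=\cdots=t_{i,m}$) shows all the $x_{k}$ have the same $(d+1)$-part; hence their distinctness is the distinctness of $r_{1},\dots,r_{m}$, and $\sum d_{k}r_{k}=dr_{m}$ then exhibits $m$ distinct elements of $R_{E}\subseteq S_{E}$ solving the equation, which is impossible by the construction of $S_{E}$. Any value $\Delta\ne 0$ surviving the size bound is eliminated by inspecting the highest base-$(d+1)$ digit at which $\sum_{k}d_{k}y_{k}$ and $dy_{m}$ disagree and using condition (i) as an exact numerical identity; this is the technical core of (A).

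To prove (B), write $w=c_{E}y+r_{1}$ with $0\le r_{1}<c_{E}$ and $y=\sum_{i}s_{i}(d+1)^{i}$, $s_{i}\in\{0,\dots,d\}$. If $y=0$, then $w=r_{1}<c_{E}$ and $w\notin R_{E}$, so $w$ is not a term of $S_{E}$; the greedy step producing the term of $S_{E}$ just above $w$ therefore rejected $w$, which means some $m$ distinct elements of an initial segment of $R_{E}$ together with $w$ solve the equation --- this is (B). If $y$ has only $0$'s and $1$'s but $r_{1}\notin R_{E}$ (so $w\ge c_{E}$), apply the previous case to $r_{1}$, which (being $<c_{E}$ and outside $R_{E}$) is not a term of $S_{E}$: we get a solution $\sum_{k}d_{k}\rho_{k}=d\rho_{m}$ in which one $\rho$ equals $r_{1}$ and the others lie in $R_{E}$ and are $<r_{1}$; adding $c_{E}y$ to every $\rho$ preserves the equation, converts the $\rho\in R_{E}$ into terms of $T_{E}$ that are still $<w$ and still pairwise distinct, and converts the one equal to $r_{1}$ into $w$, giving (B). Finally, if some digit $s_{i_{0}}\ge 2$, we imitate Lemma~\ref{variation Lemma 2}: at each position $i$ with $s_{i}\ge 1$ we put the $1$'s of $x_{2},\dots,x_{m-1}$ on a subset of $\{2,\dots,m-1\}$ of $d$-weight $d-s_{i}$ (which exists by Proposition~\ref{prop: valid}), and we set $x_{m}:=\tfrac1d\bigl(w+\sum_{k=2}^{m-1}d_{k}x_{k}\bigr)$, a term of $T_{E}$; condition (ii), used with $j=d-s_{i_{0}}\le d-2$, supplies residues $r_{2},\dots,r_{m}\in R_{E}$ with $r_{1}+\sum_{k=2}^{m-1}d_{k}r_{k}=dr_{m}$ together with the set $H_{j}$, which we install as the chosen subset at position $i_{0}$. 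The two disjointness clauses of condition (ii), combined with the observation that the digit at $i_{0}$ already separates the indices in $H_{j}$ from those in $\{2,\dots,m-1\}\setminus H_{j}$, make $x_{1}=w,x_{2},\dots,x_{m}$ pairwise distinct, and $s_{i_{0}}\ge 2$ forces each $x_{k}$ with $k\ge 2$ to be strictly below $w$; this is an $m$-distinct solution involving $w$, proving (B) in the last case.

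I expect the hardest step to be the $\Delta\ne 0$ part of (A): there condition (i) must be used as a sharp identity, and every value of $\Delta$ permitted by the size bound --- not merely $\Delta=0$ --- has to be ruled out by the digit analysis. The delicate point in (B) is the simultaneous choice, in the last case, of the base-$(d+1)$ digit patterns of $x_{2},\dots,x_{m}$ and the residues $r_{2},\dots,r_{m}$ so that the weighted sum has only $0$'s and $d$'s in its $c_{E}$-part, equals $dr_{m}$ in its residue part, makes each $x_{k}$ smaller than $w$, and yields $m$ distinct terms; it is exactly the two disjointness clauses of condition (ii), invoked with $j$ equal to $d$ minus the offending digit of $w$, that make the distinctness work out.
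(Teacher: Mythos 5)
Your overall architecture is the same as the paper's: show that the set $T_{E}$ of integers of the form \eqref{eq: system form} admits no solution in $m$ distinct terms (your (A)) and that no integer outside $T_{E}$ can be inserted without creating one (your (B)). Your treatment of (B) --- splitting on whether the base-$(d+1)$ digits of the $c_{E}$-part of $w$ are all in $\{0,1\}$, invoking the greedy definition of $S_{E}$ on the residue in the first case, and using Proposition~\ref{prop: valid} for the digit patterns together with condition (ii) at $j=d-s_{i_{0}}$ in the second --- is essentially the paper's Lemma~\ref{Lemma 1 4}, including the role of the two disjointness clauses in forcing $x_{2},\dots,x_{m}$ distinct.

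There is, however, a genuine gap in (A). You correctly reduce to $c_{E}\Delta=dr_{m}-\sum_{k=1}^{m-1}d_{k}r_{k}$ with $|\Delta|\le 1$ and you dispose of $\Delta=0$, but for $\Delta=\pm1$ you only assert that the case is "eliminated by inspecting the highest base-$(d+1)$ digit at which the two sides disagree and using condition (i) as an exact numerical identity," and you supply no argument. That deferred step is where the theorem's content lives, and it is not a routine digit check. The paper's mechanism is: normalize by a carry/borrow starting from the \emph{lowest} available digit position (not the highest), after which exactly one position $i_{0}$ and one index $k_{0}$ with $d_{k_{0}}=1$ has a digit differing from the all-$0$/all-$1$ pattern; taking $k_{0}=1$ without loss of generality, the terms $x_{2},\dots,x_{m-1}$ then share the same $c_{E}$-part, so their distinctness transfers to distinctness of the residues $r_{2},\dots,r_{m-1}$ in $R_{E}$; the rearrangement inequality then bounds $\sum_{k=1}^{m-1}d_{k}r_{k}$ below by $\sum_{k=2}^{m-1}d_{k}(m-k-1)$ when $\Delta=1$, and above by $d\max(R_{E})-\sum_{k=2}^{m-1}d_{k}(m-k-1)$ when $\Delta=-1$, and either bound contradicts condition (i). Without the observation that distinctness of the $x_{k}$ forces distinctness of the residues --- which is precisely what makes the correction term $\sum_{k=2}^{m-1}d_{k}(m-k-1)$ in condition (i) appear --- there is no visible way for condition (i) to rule out $\Delta=\pm1$, so claim (A), and hence the theorem, is not proved as written.
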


We make a few notes before presenting the proof. First, in order to simply notation, we will drop the subscripts on $c_{E}$ and $R_{E}$ when the choice of $E$ is obvious. Also, we will denote $c_{E_{m}}$ and $R_{E_{m}}$ for all integer $m\geq 3$ as simply $c_{m}$ and $R_{m}$.

Next, Theorem \ref{Layman} is a special case of Theorem \ref{conditionTheorem}. As we will show in Section \ref{Form of S_{m}}, if $E=(1,1,1)$, then we can have $c_{4}=12$ and $R_{4}=\{0,1,2,3,4\}$. If $N=\displaystyle\sum_{i=0}^{\infty}{t_{i}4^i}$ is a nonnegative integer with 0's and 1's as digits when expressed in base 4, then $c_{4}N$ has 0's and 3's as digits and ends in a 0 in base 4. As $N$ ranges over all nonnegative integers with 0's and 1's as digits when expressed in base 4 and $r$ ranges over all elements of $R_{4}$, $c_{4}N+r$ ranges over exactly the same values as described by Layman in Theorem \ref{Layman}. 

Also, given $E$, the choice of $c$ and $R$ is not unique. Using the example where $E=(1,1,1)$ above, we could also let $c_{4}=48$ and $R_{4}=\{0,1,2,3,4,12,13,14,15,16\}$, where Theorem $\ref{conditionTheorem}$ would still predict the same terms for the sequence $S_{4}$. Therefore, given $E$, we will use the minimum value of $c$ that satisfies Theorem \ref{conditionTheorem}.

\begin{proof} Let $\mathcal{B}_{E}$ be the sequence of all integers
  that can be expressed in the form
  $c\displaystyle\sum_{i=0}^{\infty} {t_{i}(d+1)^{i}}+r$,
  \emph{with $t_{i}=0$ or $1$ for all $i\geq 0$ and $r\in R$},
  arranged in increasing order. We prove that $\mathcal{B}_{E}$ is the
  same sequence as $S_{E}$.
  \begin{lemma}
    \label{Lemma 1 3} There are not distinct terms
    $x_{1},x_{2},\ldots,x_{m}$ in $\mathcal{B}_{E}$ such that $d_{1}x_{1}+\cdots+d_{m-1}x_{m-1}=dx_{m}$.
  \end{lemma}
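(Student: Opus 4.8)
The plan is to mimic the proof of Lemma~\ref{variation Lemma 1}, but now working in base $d+1$ with an extra additive offset of the form $r\in R$ on top of a $c$-multiple of a base-$(d+1)$ number with digits in $\{0,1\}$. Suppose for contradiction that we have distinct terms $x_k = c\sum_i t_{i,k}(d+1)^i + r_k$ with $t_{i,k}\in\{0,1\}$ and $r_k\in R$, satisfying $d_1x_1+\cdots+d_{m-1}x_{m-1} = dx_m$. The first step is to split the equation into its ``$c$-part'' and its ``$R$-part.'' Writing $N_k = \sum_i t_{i,k}(d+1)^i$, the equation reads $c\left(\sum_{k=1}^{m-1} d_k N_k\right) + \sum_{k=1}^{m-1} d_k r_k = c\, d N_m + d r_m$. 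The key numerical fact I would establish first is that all the $r_k$ lie in a range small enough (using condition~(i), which forces $c$ to exceed $d\max(R)$ by the right margin) that the quantity $\sum_{k=1}^{m-1} d_k r_k - d r_m$ is bounded in absolute value strictly below $c$; hence it must be an exact multiple of $c$ forced by divisibility, and in fact the bounds are tight enough to pin it down — this is where condition~(i) earns its keep. The combinatorial point is that $\sum_{k=1}^{m-1} d_k r_k \le d\cdot\max(R)$ while $d r_m \ge 0$, and the lower bound on $\sum d_k r_k$ combined with the definition of $c$ squeezes the difference into $[0,c)$ or a single residue class, so the $R$-part and the $c$-part decouple: $\sum_{k=1}^{m-1} d_k N_k = d N_m$ and (separately) the residual $R$-equation must hold.

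Once the equation decouples, the $c$-part $\sum_{k=1}^{m-1} d_k N_k = d N_m$ is exactly the situation of Lemma~\ref{variation Lemma 1}: the $N_k$ have digits in $\{0,1\}$ in base $d+1$, there is no carrying since $\sum d_k < d+1$, so digitwise we get $t_{i,k} = t_{i,m}$ for all $i$ and all $1\le k\le m-1$, forcing $N_1 = \cdots = N_m$. So all the $x_k$ share the same ``high part'' and differ only in their $r_k\in R$. Now the residual equation $\sum_{k=1}^{m-1} d_k r_k = d r_m$ must hold among elements $r_1,\dots,r_m$ of $R = \{a_0,\dots,a_z\}$, and — crucially — since $x_1,\dots,x_m$ are distinct but $N_1=\cdots=N_m$, the $r_k$ must all be distinct. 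But $R$ consists of the first several terms of $S_E$ itself, which by definition of the greedy sequence $S_E$ contains no $m$ distinct elements satisfying $d_1r_1+\cdots+d_{m-1}r_{m-1} = dr_m$. That is the contradiction.

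The main obstacle I anticipate is making the ``decoupling'' step fully rigorous: one has to check carefully that the range of possible values of $\sum_{k=1}^{m-1} d_k r_k - d r_m$, as the $r_k$ range over $R$, really does land in an interval of length less than $c$ positioned so that divisibility by $c$ forces the $c$-parts to match exactly. The lower bound $\sum_{k=2}^{m-1} d_k r_k \ge \sum_{k=2}^{m-1} d_k\,a_{k-1} \ge \sum_{k=2}^{m-1} d_k(k-1)$ — using that $R$ starts $0,1,2,\dots$ — paired with $d_1 r_1 \ge 0$ and $d r_m \le d\max(R)$, should give $\sum d_k r_k - d r_m \ge -d\max(R) + \sum_{k=2}^{m-1} d_k(k-1)$, and comparing with $c = 1 + d\max(R) - \sum_{k=2}^{m-1} d_k(m-k-1)$ one checks the interval fits; I'd want to double-check the index bookkeeping $(k-1)$ versus $(m-k-1)$ here, since that is exactly the content of condition~(i) and the only place the precise formula for $c$ is used. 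Everything after the decoupling is a direct reduction to Lemma~\ref{variation Lemma 1} and to the defining property of $S_E$, so it should be routine; I would also note in passing that the distinctness of the $r_k$ (rather than merely ``not all the same'') is what lets us invoke the $S_E$ condition rather than the weaker $A_E$ condition, consistent with $R_E$ being an initial segment of $S_E$.
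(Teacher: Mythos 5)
Your overall frame (split each $x_k$ into its $c$-part $N_k$ and its remainder $r_k\in R$, decouple, reduce the $c$-part to Lemma~\ref{variation Lemma 1} and the $R$-part to the defining property of $S_E$) matches the paper's Case~1, but the decoupling step has a genuine gap, and it is exactly the part you flagged as the ``main obstacle.'' Condition (i) does \emph{not} force $c>d\max(R)$: it gives $c=1+d\max(R)-\sum_{k=2}^{m-1}d_k(m-k-1)\le 1+d\max(R)$, and the paper only proves the weaker bound $d\max(R)<2c$. Concretely, for $E=(1,1,1)$ one has $c=12$ and $d\max(R)=3\cdot 4=12=c$, so $\sum_{k=1}^{m-1}d_kr_k-dr_m$ ranges over $[-c,c]$ and can equal $\pm c$ (e.g.\ $r_1=r_2=r_3=4$, $r_m=0$). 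Your proposed rescue --- lower-bounding $\sum_{k\ge 2}d_kr_k$ by pairing the $d_k$ with $0,1,2,\dots$ --- is circular: that bound requires $r_2,\dots,r_{m-1}$ to be \emph{distinct}, but at this stage you only know the $x_k$ are distinct; distinctness of the $r_k$ is something you deduce \emph{after} decoupling, from $N_1=\cdots=N_m$. Without it, all the $r_k$ with $k\le m-1$ could coincide and the difference escapes the interval $(-c,c)$.

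The paper closes this gap with two extra cases: when $dr_m-\sum_{k=1}^{m-1}d_kr_k=\pm c$, it performs a carrying/borrowing adjustment on the digits $T_i$ of $dx_m$ in base $d+1$, which shows that at some position $i_0$ the digit equation $\sum_k d_kt_{i_0,k}=T_{i_0}$ forces $t_{i_0,k}$ to agree for all $k$ except a single index $k_0$ with $d_{k_0}=1$; taking $k_0=1$ without loss of generality, this \emph{does} force $r_2,\dots,r_{m-1}$ to be distinct, and only then does the rearrangement-inequality bound (with the correct pairing $\sum_{k=2}^{m-1}d_k(m-k-1)$, the minimum, not $\sum d_k(k-1)$) contradict condition (i). So the carrying cases are not a technicality you can squeeze away; they are where condition (i) is actually used, and your argument as written does not handle them.
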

  \begin{proof} We prove this by contradiction. Assume there are $m$
    distinct numbers $x_1, x_2,\ldots,x_{m}$ in $\mathcal{B}_{E}$ such
    that
    \begin{equation}
      \label{eq: assumption1}
      d_{1}x_{1}+\cdots+d_{m-1}x_{m-1}=dx_{m}.
    \end{equation} Because $x_1, x_2,\ldots,x_{m}$ are in
    $\mathcal{B}_{E}$, we can express $x_k
    =c\displaystyle\sum_{i=0}^{\infty}
    {t_{i,k}(d+1)^{i}}+r_{k}$, with $t_{i}=0$ or $1$ for all
    $i\geq 0$ and $r\in R$, for all $1\leq k\leq m$. Let
    $X=dx_{m}$ and express $X$ as
    $c\displaystyle\sum_{i=0}^{\infty}
    {T_{i}(d+1)^{i}}+\mathcal{R}$ such that
    $T_{i}=dt_{i,m}$ for all $i\geq 0$ and
    $\mathcal{R}=dr_{m}$. Because of equation \eqref{eq:
      assumption1},
    \begin{equation}
      \label{eq: assumption1again} c\sum_{i=0}^{\infty}{T_{i}(d+1)^{i}}+\mathcal{R}=c\left(\sum_{k=1}^{m-1}{\sum_{i=0}^{\infty}{d_{k}t_{i,k}(d+1)^i}}\right)+\sum_{k=1}^{m-1}{d_{k}r_{k}}.
    \end{equation} 
  
    If $\mathcal{R}\neq\displaystyle\sum_{k=1}^{m-1}{d_{k}r_{k}}$,
    then $\mathcal{R}-\displaystyle\sum_{k=1}^{m-1}{d_{k}r_{k}}$ is a
    multiple of $c$ or equation \eqref{eq: assumption1again} cannot be
    satisfied. Since both $\mathcal{R}$ and
    $\displaystyle\sum_{k=1}^{m-1}{d_{k}r_{k}}$ are bounded above and
    below by $d\max(R)$ and 0, the difference between
    $\mathcal{R}$ and $\displaystyle\sum_{k=1}^{m-1}{d_{k}r_{k}}$ is at
    most $d\max(R)$. 
    
    We show that $d\max(R)<2c$. By condition
    (i), $2c>2d\max(R)-2\displaystyle\sum_{k=2}^{m-1}{d_{k}(m-k-1)}$. Then,
    since $\max(R)\geq m-2$ and
    $\left(\displaystyle\sum_{k=2}^{m-1}{d_{k}}\right)(\frac{0+m-3}{2})\geq
    \displaystyle\sum_{k=2}^{m-1}{d_{k}(m-k-1)}$ by the rearrangement
    inequality,
    \begin{align*}
      %c&=\left(\sum_{k=1}^{m-1}{d_{k}}\right)r_{z}-\sum_{k=2}^{m-1}{d_{k}(m-k-1)}+1\\
      %c&>\left(\sum_{k=1}^{m-1}{d_{k}}\right)r_{z}-\sum_{k=2}^{m-1}{d_{k}(m-k-1)}\\
      2c&>2d\max(R_{E})-2\sum_{k=2}^{m-1}{d_{k}(m-k-1)}\\
      2c&>d\max(R_{E})+d(m-2)-2\left(\sum_{k=2}^{m-1}{d_{k}}\right)\left(\frac{0+m-3}{2}\right)\\
      2c&>d\max(R_{E}).
    \end{align*}

    Since $d\max(R)<2c$, $\mathcal{R}$ and
    $\displaystyle\sum_{k=1}^{m-1}{d_{k}r_{k}}$ can differ only by
    $c$.
    
    Therefore, we have 3 cases to consider.\\
    Case 1: $\displaystyle\mathcal{R}=\sum_{k=1}^{m-1}
    {d_{k}r_{k}}$
    
    If $\displaystyle\mathcal{R}=\sum_{k=1}^{m-1}
    {d_{k}r_{k}}$, then we have $\displaystyle\sum_{i=0}^{\infty}
    {T_{i}(d+1)^{i}}=\displaystyle\sum_{k=1}^{m-1}{\displaystyle\sum_{i=0}^{\infty}{d_{k}t_{i,k}(d+1)^i}}$,
    which means $T_{i}=\displaystyle\sum_{k=1}^{m-1}{d_{k}t_{i,k}}$ for
    all $i$ by the same argument we used in Lemma \ref{variation Lemma
      1}. %Define a little more precisely why 
    If $T_{i}=0$, then $t_{i,k}=0$
    for all $1\leq k\leq m-1$. If $T_{i}=d$, then %RESUME HERE
    $t_{i,k}=1$ for all $1\leq k\leq m-1$. Then, for
    $x_1,x_2,\ldots,x_{m}$ to be distinct, there must be $m$ distinct
    values $r_{1},r_{2},\ldots,r_{m}$ that satisfy equation
    $\mathcal{R}=dr_{m}=\displaystyle\sum_{k=1}^{m-1}
    {d_{k}r_{k}}$. However, this is impossible because
    $r_{1},r_{2},\ldots,r_{m}$ are terms of $S_{E}$.\\
	  Case 2:
    $\mathcal{R}=\displaystyle\sum_{k=1}^{m-1}{d_{k}r_{k}}+c$

    Let $i_{0}$ be the minimum nonnegative integer such that
    $T_{i_{0}}=0$. Subtract $c$ from $\mathcal{R}$, add $1$ to
    $T_{i_{0}}$ and set $T_{i}=0$ for all $i<i_{0}$ so that
    $\mathcal{R}=\displaystyle\sum_{k=1}^{m-1}{d_{k}r_{k}}$ and the value
    of $X$ is unchanged. This process is similar to the process of
    carrying digits upon addition. Therefore, $T_{i_{0}}=1$ and $T_{i}$ is
    0 or $d$ for all $i\neq i_{0}$. Since
    $\mathcal{R}=\displaystyle\sum_{k=1}^{m-1}{d_{k}r_{k}}$,
    $T_{i}=\displaystyle\sum_{k=1}^{m-1}{d_{k}t_{i,k}}$ for all $i$. For
    all $i\neq i_{0}$, if $T_{i}$ is 0, then $t_{i,k}=0$ for all
    $1\leq k\leq m-1$. If $T_{i}=d$, then $t_{i,k}=1$ for
    all $1\leq k\leq m-1$. Finally, $t_{i_{0},k}=0$ for all $1\leq k\leq
    m-1$ except when $k=k_{0}$ for some $k_{0}$, where $d_{k_{0}}=1$ and
    $t_{i_{0},k_{0}}=1$.

    Since $d_{k_{0}}=1$, without loss of generality, we can let
    $k_{0}=1$. Then, $r_{2},\ldots,r_{m-1}$ must be distinct for
    $x_{2},\ldots,x_{m-1}$ to be distinct. So by the rearrangement
    inequality, the minimum value of
    $\displaystyle\sum_{k=1}^{m-1}{d_{k}r_{k}}$ is $0\cdot
    d_{1}+\displaystyle\sum_{k=2}^{m-1}{d_{k}(m-k-1)}$. Also, since
    $\mathcal{R}\leq d\max(R)$ and we subtracted
    $c$ from $\mathcal{R}$, $\mathcal{R}\leq
    d\max(R)-c$. Since
    $\mathcal{R}=\displaystyle\sum_{k=1}^{m-1}{d_{k}r_{k}}$, that means
    $d\max(R)-c\geq
    \displaystyle\sum_{k=2}^{m-1}{d_{k}(m-k-1)}$. However, this
    contradicts condition (i). \\ 
    Case 3: $\mathcal{R}=
    \displaystyle\sum_{k=1}^{m-1}{d_{k}r_{k}}-c$

    Let $T_{i_{0}}$ be the minimum nonnegative integer such that
    $T_{i_{0}}=d$. Add $c$ to $\mathcal{R}$, subtract
    $1$ from $T_{i_{0}}$ and set $T_{i}=d$ for all
    $i<i_{0}$ so that $R_{E}=\displaystyle\sum_{k=1}^{m-1}{d_{k}r_{k}}$
    and the value of $X$ is unchanged. This process is similar to carrying
    digits upon subtraction. So $T_{i_{0}}=d-1$ and $T_{i}$
    is 0 or $d$ for all $i\neq i_{0}$. Since
    $\mathcal{R}=\displaystyle\sum_{k=1}^{m-1}{d_{k}r_{k}}$,
    $T_{i}=\displaystyle\sum_{k=1}^{m-1}{d_{k}t_{i,k}}$ for all $i$. For
    all $i\neq i_{0}$, if $T_{i}$ is 0, then $t_{i,k}=0$ for all $k$. If
    $T_{i}=d$, then $t_{i,k}=1$ for all $k$. Also
    $t_{i_{0},k}=1$ for all $1\leq k\leq m-1$ except when $k=k_{0}$ for
    some $k_{0}$ where $d_{k_{0}}=1$ and $t_{i_{0},k_{0}}=0$.

    Since $d_{k_{0}}=1$, without loss of generality, we can let
    $k_{0}=1$. Then, $r_{2},\ldots,x_{m-1}$ must be distinct so that
    $x_{2},\ldots,r_{m-1}$ are distinct. So by the rearrangement
    inequality, the value of $\displaystyle\sum_{k=1}^{m-1}{d_{k}r_{k}}$
    is less than or equal to
    $d_{1}\max(R)+\displaystyle\sum_{k=2}^{m-1}{d_{k}(\max(R)-m+1+k)}$. Also,
    since $\mathcal{R}\geq 0$ and we added $c$ to $\mathcal{R}$,
    $\mathcal{R}\geq c$. Since
    $\mathcal{R}=\displaystyle\sum_{k=1}^{m-1}{d_{k}r_{k}}$, that means
    \begin{align*} 
    	c&\leq d_{1}\max(R)+\sum_{k=2}^{m-1}{d_{k}(\max(R)-m+1+k)}\\
      c&\leq d\max(R)-\sum_{k=2}^{m-1}{d_{k}(m-k-1)},
    \end{align*} 
    which contradicts condition (i).
  \end{proof}

  To finish the proof of Theorem \ref{conditionTheorem}, we need to
  show that no additional elements can be inserted into
  $\mathcal{B}_{E}$.

  \begin{lemma}
    \label{Lemma 1 4} Given any value $y_{1}$ not a term of
    $\mathcal{B}_{E}$, there are distinct terms $y_{2},y_{3},\ldots,y_{m}$
    of $\mathcal{B}_{E}$, each less than $y_{1}$, such that there is a
    permutation $x_{1},\ldots,x_{m}$ of $y_{1},\ldots,y_{m}$ such that
    $d_{1}x_{1}+\cdots+d_{m-1}x_{m-1}=dx_{m}$.
  \end{lemma}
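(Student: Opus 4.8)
The plan is to show that $\mathcal{B}_E$ satisfies the maximality half of the defining property of $S_E$: given any $y_1$ that is not a term of $\mathcal{B}_E$, I must exhibit distinct terms $y_2,\dots,y_m$ of $\mathcal{B}_E$, each less than $y_1$, and a permutation $x_1,\dots,x_m$ of $y_1,\dots,y_m$ with $d_1x_1+\cdots+d_{m-1}x_{m-1}=dx_m$. First I would put $y_1$ in the canonical form of Proposition~\ref{Lemma 1 0} with $M=d+1$, writing $y_1=c\sum_{i\ge 0}t_{i,1}(d+1)^i+r_1=cN_1+r_1$ with $0\le r_1<c$ and digits $t_{i,1}\in\{0,\dots,d\}$. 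Since $y_1\notin\mathcal{B}_E$, exactly one of two cases occurs: \emph{Case 1}, in which every $t_{i,1}\in\{0,1\}$ (so $N_1\in B_E$) but $r_1\notin R$; or \emph{Case 2}, in which $t_{i_0,1}\ge 2$ for some $i_0$.

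In Case 1, observe that $r_1<c=a_{z+1}$, so $r_1$ is an integer the greedy construction of $S_E$ passed over; hence there are distinct terms $s_1,\dots,s_m$ of $S_E$, all at most $r_1$ — so all lying in $R$ except the unique one equal to $r_1$ — with $d_1s_1+\cdots+d_{m-1}s_{m-1}=ds_m$. Because $d_1+\cdots+d_{m-1}=d$, this equation is translation invariant, so $(s_1+cN_1,\dots,s_m+cN_1)$ is again a solution. Its entry coming from $r_1$ equals $y_1$; every other entry is $cN_1+s_k$ with $N_1\in B_E$ and $s_k\in R$, hence is a term of $\mathcal{B}_E$, is less than $y_1$ since $s_k<r_1$, and the $m-1$ of them are distinct because the $s_k$ are. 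These entries serve as $y_2,\dots,y_m$ and the translated tuple as the permutation. (If $N_1=0$ this is just the definition of $S_E$ applied to $r_1$.)

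In Case 2 I would treat the \emph{high part} $cN_1$ and \emph{low part} $r_1$ of $y_1$ separately. Re-running the construction in the proof of Lemma~\ref{variation Lemma 2} on $N_1\notin B_E$ — using, at each digit position $i$ with $t_{i,1}>0$, some set $H_{d-t_{i,1}}\subseteq\{2,\dots,m-1\}$ with $\sum_{k\in H_{d-t_{i,1}}}d_k=d-t_{i,1}$ (provided by Proposition~\ref{prop: valid}) — produces $N_2,\dots,N_m\in B_E$, each less than $N_1$, with $d_1N_1+\cdots+d_{m-1}N_{m-1}=dN_m$. Invoking condition~(ii) of Theorem~\ref{conditionTheorem} with this residue $r_1$ and a suitable index $j$ produces $r_2,\dots,r_m\in R$ with $d_1r_1+\cdots+d_{m-1}r_{m-1}=dr_m$. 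Setting $y_k=cN_k+r_k$ for $2\le k\le m$ gives terms of $\mathcal{B}_E$; adding the two equations after weighting by the $d_k$ gives $d_1y_1+\cdots+d_{m-1}y_{m-1}=dy_m$, and $y_k<y_1$ follows from $N_k<N_1$ and $\max(R)<c$.

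The step I expect to be the main obstacle is the distinctness of $y_2,\dots,y_m$ in Case 2. By the uniqueness in Proposition~\ref{Lemma 1 0}, $y_j=y_k$ exactly when $N_j=N_k$ and $r_j=r_k$, and in the construction $N_k=N_m$ exactly when $k$ lies in the common intersection $G=\bigcap_{i:\,t_{i,1}>0}H_{d-t_{i,1}}$ of the splitting sets. The idea is to coordinate the choice of these splitting sets with the choice of $j$ so that $j=\sum_{k\in G}d_k$ (which is at most $d-2$ precisely because some $t_{i_0,1}\ge 2$, so this is a legal index in condition~(ii)) and so that the set $H_j$ returned by condition~(ii) is exactly $G$; then its two distinctness clauses — distinctness of $\{r_k:k\in H_j\}\cup\{r_m\}$ and of $\{r_k:k\in\{2,\dots,m-1\}\setminus H_j\}$ — force $r_j\ne r_k$ on exactly the fibers where $N_j=N_k$, which yields the claim. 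Making this coordination work in general — in particular, showing one can choose the splitting sets with the prescribed weights $d-t_{i,1}$ and with prescribed intersection $G$ — is the delicate bookkeeping at the heart of the argument, and is exactly what condition~(ii) is designed to accommodate.
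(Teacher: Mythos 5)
Your Case 1 is exactly the paper's argument: write $y_1=cN_1+r_1$ via Proposition \ref{Lemma 1 0}, note $r_1\notin R$, pull a forbidden configuration for $r_1$ out of the greedy definition of $S_E$, and translate it by $cN_1$ using $d_1+\cdots+d_{m-1}=d$. Your Case 2 also has the paper's skeleton (digit construction as in Lemma \ref{variation Lemma 2} for the high parts, condition (ii) for the residues, then recombine), but the step you yourself flag as the heart of the matter --- the distinctness of $y_2,\ldots,y_m$ --- is left unproved, and the route you sketch for it does not go through as stated.

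Concretely, you propose to choose the splitting sets $H_{d-t_{i,1}}$ at \emph{every} digit position so that their common intersection $G$ equals the set $H_j$ delivered by condition (ii), with $j=\sum_{k\in G}d_k$. Nothing in Proposition \ref{prop: valid} or condition (ii) guarantees this is possible: Proposition \ref{prop: valid} only gives you \emph{some} subset of each weight, not one containing a prescribed subset, and condition (ii) hands you \emph{some} set $H_j$ of weight $j$, not one of your choosing; there is also a circularity in defining $j$ from $G$ while demanding $G=H_j$. The paper's proof shows this global coordination is unnecessary. Fix a single position $i_0$ with $t_{i_0,1}>1$, set $j=d-t_{i_0,1}$, and use the set $H_j$ from condition (ii) as the splitting set \emph{at position $i_0$ only}, choosing the splitting sets at all other positions arbitrarily. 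Then: (a) for $p,q\in H_j\cup\{m\}$ with $p\neq q$, the first distinctness clause gives $r_p\neq r_q$, hence $x_p\neq x_q$ by the uniqueness in Proposition \ref{Lemma 1 0}, regardless of whether $N_p=N_q$; (b) likewise for $p,q\notin H_j\cup\{1,m\}$ via the second clause; (c) for $p\in H_j\cup\{m\}$ and $q\notin H_j\cup\{1,m\}$, the digits at position $i_0$ are $1$ and $0$ respectively, so $N_p\neq N_q$ and again $x_p\neq x_q$. One does not need to characterize exactly which pairs satisfy $N_p=N_q$; the two residue clauses cover every pair within each block, and the single digit discrepancy at $i_0$ covers every pair across the blocks. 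With this replacement your argument closes; as written, the coordination claim is a genuine gap.
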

  \begin{proof} By Proposition \ref{Lemma 1 0}, we can express $y_{1}$
    in the form
    $y_{1}=c\displaystyle\sum_{i=0}^{\infty}{t_{i,1}(d+1)^{i}}+r_{1}$,
    where $t_{i,1}$ is an integer between 0 and $d$
    inclusive for all $i\geq 0$ and $r_{1}$ is an integer between $0$ and
    $c-1$ inclusive.

    Express $y_{k}$, for all $2\leq k\leq m$, as
    $c\displaystyle\sum_{i=0}^{\infty}
    {t_{i,k}(d+1)^{i}}+r_{k}$, where $t_{i,k}$ is 0 or $1$ and
    $r_{k}\in R$ for all $i$.

    If $t_{i,1}$ is 0 or 1 for all $i\geq 0$, let
    $t_{i,1}=\cdots=t_{i,m}$ for all $i\geq 0$. Then, $r_{1}\notin R$
    or else $y_{1}$ is a term of $\mathcal{B}_{E}$. Therefore, we can find
    distinct $r_{2},\ldots,r_{m}$, all less than $r_{1}$, such that there
    exists a permutation $s_{1},\ldots,s_{m}$ of $r_{1},\ldots,r_{m}$ that
    satisfies
    $d_{1}s_{1}+\cdots+d_{m-1}s_{m-1}=ds_{m}$. Finally,
    we can let
    $y_{k}=r_{k}+c\displaystyle\sum_{i=0}^{\infty}{t_{i,1}(d+1)^i}$
    and
    $x_{k}=s_{k}+c\displaystyle\sum_{i=0}^{\infty}{t_{i,1}(d+1)^i}$
    for all $1\leq k\leq m$.

    Now we consider the case when $t_{i,1}>1$ for some
    $i\geq 0$. Let $x_{k}=y_{k}$ for all $1\leq k\leq m$. For all $i\geq 0$, let
    $t_{i,k}=0$ for all $2\leq k\leq m$ if $t_{i,1}=0$. If $t_{i,1}\geq
    1$, then let $t_{i,k}=1$ for all $k\in
    H_{d-t_{i,1}}\cup\{m\}$ and $t_{i,k}=0$ otherwise,
    where $H_{d-t_{i,1}}$ is a subset of $\{2,\ldots,m-1\}$
    such that $\sum_{k\in H_{d-t_{i,1}}}{d_{k}}=d-t_{i,1}$. Pick any
    $i_{0}$ for which $t_{i_{0},1}>1$.

    Let $j=d-t_{i_{0},1}$. By condition (ii), we can
    find a set $H_{j}$ and terms $r_{2},\ldots,r_{m}\in R$ such that
    $\displaystyle\displaystyle\sum_{k\in H_{j}}{d_{k}}=j$,
    $\displaystyle\sum_{k=1}^{m-1}{d_{k}r_{k}}=dr_{m}$,
    all elements of $\{r_{k}:k\in H_{j}\}\cup\{r_{m}\}$ are distinct, and
    all elements of $\{r_{k}:k\notin H_{j}\cup\{1,m\}\}$ are distinct.

    Then, $x_{p}\neq x_{q}$ if $p\in H_{j}\cup\{m\}$ and $q\notin
    H_{j}\cup\{1,m\}$ because $t_{i_{0},j}\neq t_{i_{0},k}$. Also, all
    elements of $\{x_{k}:k\in H_{j}\}\cup\{x_{m}\}$ are distinct, and all
    elements of $\{x_{k}:k\notin H_{j}\cup\{1,m\}\}$ are
    distinct. Therefore, $x_{2},\ldots,x_{m}$ are distinct.

    Finally, since for all $i$, $t_{i,1}\geq t_{i,k}$ and
    $t_{i_{0},1}>t_{i_{0},k}$ for all $2\leq k\leq m$, $x_{1}>x_{k}$ for
    all $2\leq k\leq m$.

    Then, since
    $\displaystyle\sum_{k=1}^{m-1}{d_{k}t_{i,k}}=dt_{i,m}$
    for all $i$ and
    $\displaystyle\sum_{k=1}^{m-1}{d_{k}r_{k}}=dr_{m}$,
    $d_{1}x_{1}+\cdots+d_{m-1}x_{m-1}=dx_{m}$.
  \end{proof}

  Since no $m$ terms in $\mathcal{B}_{E}$ satisfy the equation
  $d_{1}x_{1}+\cdots+d_{m-1}x_{m-1}=dx_{m}$ and no additional terms can be
  inserted without creating a solution to the equation,
  $\mathcal{B}_{E}$ is the same as $S_{E}$ and our proof of Theorem
  \ref{conditionTheorem} is complete.
\end{proof}

This suggests a connection between the sequences $A_{E}$ and $S_{E}$.

\begin{corollary} Given a valid ordered tuple $E$, let
  $\mathcal{A}_{E}$ be the set of the integers in the sequence
  $A_{E}$. Then, if the sequence $S_{E}$ of terms $a_{0},a_{1},\ldots$
  satisfies conditions (i) and (ii) of Theorem \ref{conditionTheorem}
  for some $z$, the set $\{ca+r:a\in \mathcal{A}_{E}, r\in R\}$
  contains the integers in $S_{E}$, where $c=a_{z+1}$ and
  $R=\{a_{k}: 0\leq k\leq z\}$.
\end{corollary}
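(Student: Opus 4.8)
The plan is to obtain the corollary simply by placing Theorem~\ref{variantTheorem} and Theorem~\ref{conditionTheorem} side by side; no new argument is needed beyond matching up their conclusions.

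First, since $E$ is valid and $d=d_1+\cdots+d_{m-1}\ge 2$ (validity forces $d_1=d_2=1$, so $d+1\ge 3$ is a genuine base), Theorem~\ref{variantTheorem} gives
\[
\mathcal{A}_E=\left\{\sum_{i=0}^{\infty} t_i(d+1)^i : t_i\in\{0,1\}\text{ for all }i,\ \text{with }t_i=0\text{ for all large }i\right\},
\]
i.e.\ $\mathcal{A}_E$ is exactly the set of nonnegative integers whose base $d+1$ expansion uses only the digits $0$ and $1$. Next, the hypothesis that the sequence $S_E$ satisfies conditions (i) and (ii) of Theorem~\ref{conditionTheorem} for the given $z$ lets me invoke that theorem with $c=a_{z+1}$ and $R=\{a_k:0\le k\le z\}$: every term of $S_E$ can be written in the form $c\sum_{i=0}^{\infty} t_i(d+1)^i+r$ with each $t_i\in\{0,1\}$ and $r\in R$.

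The last step is the identification $c\sum_{i=0}^{\infty} t_i(d+1)^i=ca$, where $a=\sum_{i=0}^{\infty} t_i(d+1)^i$ runs over precisely $\mathcal{A}_E$ by the first step. Hence each term of $S_E$ is of the form $ca+r$ with $a\in\mathcal{A}_E$ and $r\in R$, so the set $\{ca+r:a\in\mathcal{A}_E,\ r\in R\}$ contains every integer in $S_E$, as claimed.

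I do not expect a genuine obstacle: the corollary is a bookkeeping consequence of the two theorems. The only subtlety worth flagging is that the statement asserts only \emph{containment} of $S_E$ in $\{ca+r\}$, not equality; this is deliberate, since Theorem~\ref{conditionTheorem} does not force $c$ and $R$ to be minimal, and for a non-minimal choice (such as the $E=(1,1,1)$ example with $c_4=48$) the set $\{ca+r\}$ is strictly larger than $S_E$. So there is nothing to prove in the reverse direction.
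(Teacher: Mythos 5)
Your proposal is correct and matches the paper's (implicit) reasoning: the paper states this corollary without proof, treating it exactly as you do — as immediate bookkeeping from Theorem~\ref{variantTheorem} identifying $\mathcal{A}_E$ with the base-$(d+1)$ integers having only digits $0$ and $1$, and Theorem~\ref{conditionTheorem} expressing every term of $S_E$ as $c\sum_i t_i(d+1)^i+r$. Your remark on why only containment (not equality) is claimed is also consistent with the paper's discussion of the non-uniqueness of $c$ and $R$.
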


\subsection{Closed form for $S_{m}$}
\label{Form of S_{m}}
\begin{mydef} Let $N=\{0,1,\ldots,2n-1\}\cup\{2n+1\}$. For every
  integer $m\geq 3$, Table~\ref{tbl: definitionTable} gives the set of
  integers $R_{m}$ and the integer $c_{m}$.
  \begin{table}[hbtp]
    \begin{center}
      \begin{tabular}{|c|c|c|}\hline $R_{m}$ & $c_{m}$ & $m$
        \\ \hline $\{0\}$ & $1$ & $3$\\ $\{0,1,2,3,5,7,13,26,27,28,29,31\}$
        & $122$ & $5$\\ $\{0,1,2,3,4,5,7,10,33,34,35,36,37,38\}$ & $219$ &
        $7$\\ $\{0,1,\ldots,2n\}$ & $2n^2+3n-2$ & $2n$, $n>1$\\
        $N\cup\{3n+1\}\cup\{c+2n^2+5n: c\in N\}$ & $4n^3+12n^2+5n$ & $2n+1$,
        $n>3$\\\hline
      \end{tabular}
      \caption{Definition of $R_{m}$ and $c_{m}$}\label{tbl:
        definitionTable}
    \end{center}
  \end{table}
\end{mydef}

\begin{theorem}
  \label{theTheorem} An integer is in the sequence $S_{m}$ if and
  only if it can be expressed in the form
  \begin{equation}
    \label{eq: B} c_{m}\sum_{i=0}^{\infty} {t_{i}m^{i}}+r,
  \end{equation} where $t_{i}$ can be either $0$ or $1$ for all $i\geq
  0$ and $r\in R_{m}$.
\end{theorem}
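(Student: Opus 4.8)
The plan is to deduce Theorem~\ref{theTheorem} as a direct consequence of Theorem~\ref{conditionTheorem}. Note that although Theorem~\ref{conditionTheorem} is stated as a one-sided inclusion, its proof actually shows that $S_E$ \emph{equals} the sequence $\mathcal{B}_E$ of all integers of the form $c_E\sum_i t_i(d+1)^i+r$ with $t_i\in\{0,1\}$ and $r\in R_E$; so establishing its hypotheses gives the full ``if and only if''. Specializing to $E=E_m=(1,\ldots,1)$, which is valid, we have $d=m-1$, and condition~(i) becomes the single identity
\[
c_m=1+(m-1)\max(R_m)-\sum_{k=2}^{m-1}(m-k-1)=1+(m-1)\max(R_m)-\binom{m-2}{2}.
\]
This is a routine check for each row of Table~\ref{tbl: definitionTable}: direct substitution for $m=3,5,7$, and the elementary computations giving $2n^2+3n-2$ from $\max(R_{2n})=2n$ and $4n^3+12n^2+5n$ from $\max(R_{2n+1})=2n^2+7n+1$. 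Thus the proof reduces to two points: (a) the elements of $R_m$, written in increasing order, together with $c_m$, form the genuine initial segment $a_0,\ldots,a_z,a_{z+1}$ of the greedy sequence $S_m$; and (b) condition~(ii) of Theorem~\ref{conditionTheorem} holds.

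For point~(a) with $m\in\{3,5,7\}$ this is a finite statement, verified by running the greedy algorithm until the listed terms appear and checking the next term is $c_m$ (the computer-assisted part). For the even family $m=2n$ ($n>1$) I would argue directly: a forbidden configuration is a set of $2n$ distinct nonnegative integers $x_1,\ldots,x_{2n}$ with $x_1+\cdots+x_{2n-1}=(2n-1)x_{2n}$, i.e.\ one equals the average of the other $2n-1$. First, no such configuration fits in $\{0,\ldots,2n\}$: examining the possible sums of a $2n$-element subset modulo $2n$ forces the purported average $x_{2n}$ to lie outside the subset, a contradiction, so $0,1,\ldots,2n$ are all selected. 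Next, every integer $x$ with $2n+1\le x\le c_m-1$ is blocked, since $x+x_2+\cdots+x_{2n-1}=(2n-1)x_{2n}$ is solvable with $x_2,\ldots,x_{2n-1},x_{2n}$ distinct elements of $\{0,\ldots,2n\}$ throughout this range (there is ample room once $x_{2n}$ may be as large as $2n$), while one checks $c_m$ itself admits no such solution, so $c_m$ is the next term. The odd family $m=2n+1$ ($n>3$) follows the same template, but now one must track the layered set $R_m=N\cup\{3n+1\}\cup\{c+2n^2+5n:c\in N\}$ and reason modulo $m-1=2n$ about which sums of its elements are divisible by $2n$.

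For point~(b), fix $r_1$ with $0\le r_1\le c_m-1$ and $j$ with $0\le j\le m-3$; since all $d_k=1$ we need a $j$-element subset $H_j\subseteq\{2,\ldots,m-1\}$ and elements $r_2,\ldots,r_m\in R_m$ with $r_1+r_2+\cdots+r_{m-1}=(m-1)r_m$ such that $\{r_k:k\in H_j\}\cup\{r_m\}$ and $\{r_k:k\notin H_j\cup\{1,m\}\}$ are each sets of distinct values. In the even case $R_m\supseteq\{0,1,\ldots,2n\}$ is a long block of consecutive integers, so I would take $H_j$ to be any $j$-subset, assign one block of small distinct values to $\{r_k:k\in H_j\}$ and a disjoint block of small distinct values to $\{r_k:k\notin H_j\cup\{1,m\}\}$ (more than enough for the distinctness requirements), and use the remaining freedom—shifting these blocks changes $r_1+\cdots+r_{m-1}$ in controllable steps—to make the sum divisible by $m-1$ with $r_m=(r_1+\cdots+r_{m-1})/(m-1)\in R_m$; the bound $\max(R_m)\ge m-2$ together with the size of the block keeps $r_m$ in range. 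The odd case is analogous, again exploiting that $N$ contains the block $\{0,\ldots,2n-1\}$.

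I expect the main obstacle to be the odd family $m=2n+1$: the set $R_m$ there is engineered so that the greedy algorithm stalls \emph{precisely} at $c_m$ and so that condition~(ii) holds simultaneously, and confirming both requires a careful, somewhat lengthy case analysis modulo $2n$. The sporadic rows $m=3,5,7$ are finite computations and the even family is a clean counting argument, so these are comparatively straightforward.
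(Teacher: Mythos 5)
Your overall strategy coincides with the paper's: specialize Theorem~\ref{conditionTheorem} to $E=E_m$, note that its proof gives equality of $S_E$ with $\mathcal{B}_E$, verify condition (i) by the arithmetic you describe (your formula $c_m=1+(m-1)\max(R_m)-\binom{m-2}{2}$ and the resulting values check out), and reduce the rest to (a) identifying $S_m\cap[0,c_m-1]$ with $R_m$ and (b) verifying condition (ii). Point (a) is exactly the paper's Lemma~\ref{Lemma 1 1 and Lemma 1 2}, proved by the same template you sketch (the mod-$2n$ argument for the first $2n+1$ terms, then explicit blocking constructions, done in the appendix via Lemmas~\ref{Lemma 1 1} and~\ref{Lemma 1 2}). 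Where you genuinely diverge is point (b): you attempt a single uniform construction of $r_2,\ldots,r_m\in R_m$ for \emph{every} $r_1\in[0,c_m-1]$, whereas the paper splits into $r_1\notin R_m$ and $r_1\in R_m$. For $r_1\notin R_m$ condition (ii) is free: the greedy algorithm rejected $r_1$, so there already exist \emph{pairwise distinct} $r_2,\ldots,r_m<r_1$ in $S_m\cap[0,c_m-1]=R_m$ satisfying the equation, and any $H_j$ works. This is exactly the content of your point (a) recycled, and adopting it would let you confine the bespoke construction to $r_1\in R_m$ (the paper's Lemma~\ref{Lemma 1 3.5}).

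The genuine gap is in the justification you offer for the remaining constructions. The claims of ``ample room'' and of being able to shift blocks ``in controllable steps'' while keeping $r_m$ in range are false at the extremes: condition (i) is engineered so that the relevant inequality is \emph{exactly} tight. For $m=2n$, $r_1=c_m-1=2n^2+3n-3$ and $j=0$, one needs $2n-2$ pairwise distinct elements of $\{0,\ldots,2n\}$ with $r_1+\sum_{k=2}^{2n-1}r_k=(2n-1)r_{2n}$ and $r_{2n}\le 2n$; the minimum possible value of $\sum_{k=2}^{2n-1}r_k$ is $\binom{2n-2}{2}=2n^2-5n+3$, which forces the sum to equal $2n(2n-1)$ exactly and the multiset to be essentially $\{0,1,\ldots,2n-3\}$ with $r_{2n}=2n$ --- there is zero slack, not ample room. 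The same tightness occurs in the odd case (where, additionally, $R_{2n+1}$ is not a block: it omits $2n$, the interval $[2n+2,3n]$ except $3n+1$, and everything up to $2n^2+5n$, so ``consecutive small values'' are not freely available and residues mod $2n$ must be engineered around the gaps). So the existence statements are true, but your argument for them does not establish them; one needs either the greedy-rejection shortcut above plus a finite case analysis on $R_m$ (the paper's Tables and Lemmas~\ref{Lemma 1 3.5 1}--\ref{Lemma 1 3.5 3}), or an equally explicit construction. The same caveat applies to the ``is solvable (there is ample room)'' step in your point (a) for blocking $2n+1\le x\le c_m-1$: near $x=c_m-1$ the solution is forced, which is precisely why the paper writes out explicit parametrized solutions rather than a soft counting argument.
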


\begin{proof} We need to show that conditions (i) and (ii) of Theorem
  \ref{conditionTheorem} are satisfied.

  \begin{lemma}
    \label{Lemma 1 1 and Lemma 1 2} The set $S_{m}\cap\left[0,c_{m}-1\right]$ is the same as $R_{m}$.
  \end{lemma}
  \begin{proof} In the appendix, we prove the case $m=2n$ in Lemma
    \ref{Lemma 1 1} and the case $m=2n+1$, with integer $n>3$, in Lemma \ref{Lemma
      1 2}. The cases for when $m=3,5,7$ are brute forced with a computer.
  \end{proof}

  Since $S_{m}\cap\left[0,c_{m}-1\right]=R_{m}$, we can easily check that
  condition (i) is satisfied.

  Now, we show that condition (ii) is satisfied. We want to show that
  for every integer $0\leq r_{1}\leq c_{m}-1$ and every integer
  $0\leq j\leq m-3$, there exists a subset $H_{j}$ of $\{2,\ldots,m-1\}$
  and terms $r_{2},\ldots,r_{m}\in R_{m}$ such that
  $\left|H_{j}\right|=j$,
  $\displaystyle\sum_{k=1}^{m-1}{r_{k}}=(m-1)r_{m}$, all elements of
  $\{r_{k}:k\in H_{j}\cup\{m\}\}$ are distinct, and all elements of
  $\{r_{k}:k\notin H_{j}\cup\{1,m\}\}$ are distinct. 
  
  Let $j$ be any integer between 0 and $m-3$ inclusive.
  First, we consider the case when $r_{1}\notin R_{M}$. Since
  $E_{m}$ is a valid ordered tuple, we can find a subset $H_{j}$ of
  $\{2,\ldots,m-1\}$ such that $\left|H_{j}\right|=j$. Also, by the definition of the sequence $S_{m}$, for
  every $r_{1}\notin R_{m}$, we can find distinct
  $r_{2},\ldots,r_{m}<r_{1}$ such that
  $\displaystyle\sum_{k=1}^{m-1}{r_{k}}=(m-1)r_{m}$, so that condition
  (ii) is satisfied. Now we consider the case for when $r_{1}\in
  R_{m}$.
  \begin{lemma}
    \label{Lemma 1 3.5} Given any $r_{1}\in R_{m}$, we can find
    $r_{2},r_{3},\ldots,r_{m}\in R_{m}$ such that $r_{2},\ldots,r_{m-1}$ are distinct
    and $\displaystyle\sum_{k=1}^{m-1}{r_{k}}=(m-1)r_{m}$.
  \end{lemma}
  \begin{proof} The result follows immediately from Lemmas \ref{Lemma
      1 3.5 1} and \ref{Lemma 1 3.5 3} in the Appendix, where we prove the
    cases when $m$ is even and $m$ is odd separately.
  \end{proof}

  Let $r_{1}$ be an element of $R_{m}$.  
  \begin{comment}
  We need to find a
  subset $H_{j}$ of $\{2,\ldots,m-1\}$ and terms $r_{2},\ldots,r_{m}\in
  R$ such that $\left|H_{j}\right|=j$,
  $\displaystyle\sum_{k=1}^{m-1}{r_{k}}=(m-1)r_{m}$, all elements of
  $\{r_{k}:k\in H_{j}\}\cup\{r_{m}\}$ are distinct, and all elements of
  $\{r_{k}:k\notin H_{j}\cup\{1,m\}\}$ are distinct.
  \end{comment}
  By Lemma
  \ref{Lemma 1 3.5}, let $r_{2},\ldots,r_{m}\in R_{m}$ be
  chosen such that $\displaystyle\sum_{k=1}^{m-1}{r_{k}}=(m-1)r_{m}$ and $r_{2},\ldots,r_{m-1}$ are distinct. If
  there is some value $2\leq k_{0}\leq m-1$ for which $r_{k_{0}}=r_{m}$,
  then let $k_{0}\notin H_{j}$. Otherwise, we can let any $j$ integers
  between $2$ and $m-1$ to be in $H_{j}$.

  Since both conditions (i) and (ii) are satisfied, the proof is
  complete.
\end{proof}

\subsection{Closed forms for particular $S_{E}$}
\begin{table}[hbtp]
  \begin{center}
    \begin{tabular}{|l|l|p{6cm}|}\hline $E$ & Closed Form & $R_{E}$\\ \hline 
    	(1, 1, 1) & $12\sum_{i=0}^{\infty}{t_{i}4^i}+r$ &$ r\in\{0, 1, 2, 3, 4\}$ \\ 
    	(1, 1, 2) & $16\sum_{i=0}^{\infty}{t_{i}5^i}+r$ & $r\in\{0, 1, 2, 3, 4\}$ \\
    	(1, 1, 1, 1) & $122\sum_{i=0}^{\infty}{t_{i}5^i}+r$ & $r\in\{0, 1, 2, 3, 5, 7, 13, 26, 27, 28, 29, 31\}$ \\ 
      (1, 1, 1, 2) & $103\sum_{i=0}^{\infty}{t_{i}6^i}+r$ & $r\in\{0, 1, 2, 3, 4, 14, 18, 19, 20, 21\}$\\ 
      (1, 1, 2, 3) & $81\sum_{i=0}^{\infty}{t_{i}8^i}+r$ & $r\in\{0, 1, 2, 3, 4, 14, 17, 31, 130, 131, 132,$ $133, 134, 144, 147\}$\\ 
      (1, 1, 2, 4) & $29\sum_{i=0}^{\infty}{t_{i}9^i}+r$ & $r\in\{0, 1, 2, 3, 4\}$\\ 
      (1, 1, 1, 1, 1) & $25\sum_{i=0}^{\infty}{t_{i}6^i}+r$ & $r\in\{0, 1, 2, 3, 4, 5, 6\}$ \\
      (1, 1, 1, 1, 2) & $31\sum_{i=0}^{\infty}{t_{i}7^i}+r $ & $r\in\{0, 1, 2, 3, 4, 5, 6\}$ \\ 
      (1, 1, 1, 1, 3) & $30\sum_{i=0}^{\infty}{t_{i}8^i}+r $ & $r\in\{0, 1, 2, 3, 4, 5\}$ \\
      (1, 1, 1, 1, 4) & $51\sum_{i=0}^{\infty}{t_{i}9^i}+r $ & $r\in\{0, 1, 2, 3, 4, 6, 7\}$ \\ 
      (1, 1, 1, 2, 2) & $106\sum_{i=0}^{\infty}{t_{i}8^i}+r $ & $r\in\{0, 1, 2, 3, 4, 14, 15, 16\}$ \\ 
      (1, 1, 1, 2, 3) & $1170\sum_{i=0}^{\infty}{t_{i}9^i}+r $ & $r\in\{0, 1, 2, 3, 4, 14, 17, 31, 130, 131, 132,$ $133, 134, 144, 147\}$\\ 
      (1, 1, 1, 3, 3) & $38\sum_{i=0}^{\infty}{t_{i}10^i}+r$ & $r\in\{0, 1, 2, 3, 4, 5\}$ \\ 
      (1, 1, 1, 3, 4) & $43\sum_{i=0}^{\infty}{t_{i}11^i}+r$ & $r\in\{0, 1, 2, 3, 4, 5\}$ \\
      (1, 1, 1, 3, 5) & $48\sum_{i=0}^{\infty}{t_{i}12^i}+r$ & $r\in\{0, 1, 2, 3, 4, 5\}$ \\
      (1, 1, 1, 3, 6) & $653\sum_{i=0}^{\infty}{t_{i}13^i}+r$ & $r\in\{0, 1, 2, 3, 4, 12, 34,42, 48, 55\}$ \\
      (1, 1, 2, 2, 2) & $32\sum_{i=0}^{\infty}{t_{i}9^i}+r$ & $r\in\{0, 1, 2, 3, 4, 5\}$ \\ 
      (1, 1, 2, 2, 3) & $ 208\sum_{i=0}^{\infty}{t_{i}10^i}+r$ & $r\in\{0, 1, 2, 3, 4, 18, 19, 20, 24\}$\\ 
      (1, 1, 2, 2, 5) & $3622\sum_{i=0}^{\infty}{t_{i}12^i}+r$ & $r\in\{0, 1, 2, 3, 4, 19, 22, 28, 50, 300, 301,$ $302, 303, 304, 319, 322, 330\}$ \\ 
      (1, 1, 2, 2, 6) & $52\sum_{i=0}^{\infty}{t_{i}13^i}+r$ & $r\in\{0, 1, 2, 3, 4, 5\}$ \\
      (1, 1, 2, 3, 3) & $401\sum_{i=0}^{\infty}{t_{i}11^i}+r$ &$r\in\{0, 1, 2, 3, 4, 8, 37, 38, 39, 40, 41\}$ \\ 
      (1, 1, 2, 3, 4) & $420\sum_{i=0}^{\infty}{t_{i}12^i}+r$ &$r\in\{0, 1, 2, 3, 4, 23, 35, 37, 39\}$\\
      (1, 1, 2, 3, 7) & $61\sum_{i=0}^{\infty}{t_{i}15^i}+r $ & $r\in\{0, 1, 2, 3, 4, 5\}$ \\
      (1, 1, 2, 4, 4) & $50\sum_{i=0}^{\infty}{t_{i}13^i}+r$ & $r\in\{0, 1, 2, 3, 4, 5\}$ \\ 
      (1, 1, 2, 4, 7) & $80\sum_{i=0}^{\infty}{t_{i}16^i}+r $ & $r\in\{0, 1, 2, 3, 4, 5, 6\}$
      \\\hline
    \end{tabular}
  \end{center}
  \caption{Closed Forms for $S_{E}$}\label{tbl: closedform1}
\end{table} With a computer program, we tested the valid ordered
tuples $E=(d_{1},\ldots,d_{m-1})$ for when $4\leq m\leq 7$ until the
terms exceeded 80,000 to identify closed forms for $S_{E}$ for 129
choices of $E$. The 25 tuples the computer found when $4\leq m\leq 6$
are given in Table \ref{tbl: closedform1}, where $t_{i}=0$ or $1$ for
$i\geq 0$ for each of the closed forms.  
\section{Asymptotics}
\label{section: asymptotics} Let $g(n)$ be the number of terms of
$A_{E}$ that are less than $n$, for some positive real $n$ and valid ordered tuple $E$. Similarly,
let $h(n)$ be the number of terms of $S_{E}$ that are less than
$n$. We will derive bounds for $g(n)$ and $h(n)$ and growth rates
of $A_{E}$ and $S_{E}$.

For any valid ordered tuple $E$ and nonnegative integer $i_{0}$,
$g((d+1)^{i_{0}})=2^{i_{0}}$ because there are $2^{i_{0}}$
numbers that, when expressed in base $d+1$, have at most
$i_{0}$ digits and only 0's and 1's as digits. Therefore for a
nonnegative integer $n$, we have
\begin{align*} 
	2^{\left\lfloor\log_{d+1}(n)\right\rfloor}&\leq g(n)\leq 2^{\left\lceil\log_{d+1}(n)\right\rceil},\\ 
	\frac{1}{2}\cdot2^{\log_{d+1}(n)}&\leq g(n)\leq 2\cdot2^{\log_{d+1}(n)},\\
  \frac{1}{2}n^{\log_{d+1}(2)}&\leq g(n)\leq 2n^{\log_{d+1}(2)}.
\end{align*} 
From these bounds,
$g(n)=\Theta(n^{\log_{d+1}(2)})$. Also, from these
bounds, we can derive bounds for the growth rate of $A_{E}$. Let the
terms of $A_{E}$ be $a_{0},a_{1},\ldots$. Since $g(a_{n})=n$,
\begin{align*} 
	\frac{1}{2}a_{n}^{\log_{d+1}(2)}&\leq n\leq 2a_{n}^{\log_{d+1}(2)},\\
  2^{\log_{2}(d+1)}n^{\log_{2}(d+1)}&\geq a_{n}\geq 2^{-\log_{2}(d+1)}n^{\log_{2}(d+1)}.
\end{align*} 
Therefore, $a_{n}=\Theta(n^{\log_{2}(d+1)})$.

Now, we bound $h(n)$. Suppose that all terms of $S_{E}$ can be
expressed in the form
\begin{equation}
	r+c\displaystyle\sum_{i=0}^{\infty}{t_{i}(d+1)^{i}},
\end{equation}
where $t_{i}$ is 0 or 1 for all $i\geq 0$, $c$ is a constant, and
$r\in R$ for a set $R$ that contains nonnegative integers that
are all less than $c$. Then, for any positive integer multiple
$k_{0}c$ of $c$, we have $h(k_{0}c)=\left|R\right|g(k_{0})$
because there are $g(k_{0})$ ways to choose the sequence
$t_{0},t_{1},\ldots$ and $\left|R\right|$ ways to choose
$r$. Therefore, for a nonnegative integer $n$, we have
\begin{align*} 
	\left|R\right|g(\left\lfloor\frac{n}{c}\right\rfloor)&\leq h(n)\leq\left|R\right|g(\left\lceil\frac{n}{c}\right\rceil),\\
  \left|R\right|g(\frac{n}{c}-1)&\leq h(n)\leq\left|R\right|g(\frac{n}{c}+1),\\
  \left|R\right|\frac{1}{2}(\frac{n}{c}-1)^{\log_{d+1}(2)}&\leq h(n)\leq\left|R\right|2(\frac{n}{c}+1)^{\log_{(d+1)}(2)},\\
  \frac{1}{2}\left|R\right|c^{-\log_{d+1}(2)}(n-c)^{\log_{d+1}(2)}&\leq h(n)\leq 2\left|R\right|c^{-\log_{d+1}(2)}(n+c)^{\log_{d+1}(2)}.
\end{align*} 
From these bounds, we get $h(n)=\Theta(n^{\log_{d+1}(2)})$. Also, from these
bounds, we can derive a bound for the growth rate of $S_{E}$. Let the
terms of $S_{E}$ be $a_{0},a_{1},\ldots$. Since $h(a_{n})=n$,
\begin{align*}
  \left|R\right|\frac{1}{2}(\frac{a_{n}}{c}-1)^{\log_{d+1}(2)}
  &\leq  n\leq\left|R\right|2(\frac{a_{n}}{c}+1)^{\log_{d+1}(2)},\\
  c(\frac{2}{\left|R\right|})^{\log_{2}(d+1)}n^{\log_{2}(d+1)}+c&\geq a_{n}\geq
  c(\frac{1}{2\left|R\right|})^{\log_{2}(d+1)}n^{\log_{2}(d+1)}-c.
\end{align*} 
Therefore, $a_{n}=\Theta(n^{\log_{2}(d+1)})$.

Given a valid ordered tuple $E=(d_{1},\ldots,d_{m-1})$, let $f(n)$ be 
the maximum cardinality over all subsets of $\{0,\ldots,n-1\}$ that do not contain 
a solution to $d_{1}x_{1}+\cdots+d_{m-1}x_{m-1}=dx_{m}$ in elements not all the same.
Milenkovic,
Kashyap, and Leyba \citep{Milenkovic06} showed that Behrend's
construction \citep{Behrend46} can be modified to show that
$f(n)\geq
\gamma_{1}ne^{-\gamma_{2}\sqrt{\ln(n)}-\frac{1}{2}\ln(\ln(n))}(1+o(1))$
for $n>d^2$, where
$\gamma_{1}=d^2\sqrt{\frac{1}{2}\ln(d)}$,
$\gamma_{2}=2\sqrt{2\ln(d)}$, and $o(1)$ vanishes as
$n\rightarrow\infty$. Since $f(n)$ is asymptotically
greater than $g(n)$, for all valid ordered tuples $E$, and $h(n)$,
for all tuples $E$ for which we have a closed form of $S_{E}$, we have
shown that the greedy algorithm is not optimal in these cases.

However, it should be noted that Behrend's construction, while much
stronger asymptotically, is less efficient for small values of
$n$. For example, if we let $E=E_{4}$ and $n=10^{10}$, the bound
obtained by Milenkovic, Kashyap, and Leyba shows that
$f(10^{10})\geq 3187$. The bounds obtained by the
greedy algorithm show $h(10^{10})\geq\left\lceil
  \left|R\right|\frac{1}{2}(\frac{10^{10}}{c}-1)^{\log_{d+1}(2)}\right\rceil=15360$
and $f(10^{10})\geq
g(10^{10})\geq\left\lceil\frac{1}{2}(10^{10})^{\log_{d+1}(2)}\right\rceil=10133$.

\section{Conclusion} We have found the closed forms of all sequences
$A_{E}$, given any valid ordered tuple $E$. Also, we have found the
closed forms of $S_{E}$ for specific choices of $E$, including $E_{m}$
for all $m\geq 3$. Possible future work include simplifying the condition 
needed to be satisfied in Theorem \ref{conditionTheorem} or extending Theorem
\ref{conditionTheorem} to cover more tuples $E$ for when $S_{E}$ has a
closed form. Also, generating the sequences and plotting them suggests
that, in general, there are sequences that cannot be described in a
similar way to our closed forms. Further research can also be done
include in bounding the rates of growth of these sequences. For
example, given an ordered tuple of positive integers
$E=(d_{1},\ldots,d_{m-1})$, it appears that $S_{E}$ grows at least as
fast asymptotically as $S_{m}$.
\section{Acknowledgments}

I would like to thank the Center for Excellence in Education, the
Research Science Institute, and Akamai for funding me in the summer. I
would also like to thank Nan Li for mentoring me, Professor
Richard Stanley for the project idea, and my tutor Dr.~John Rickert. I
am also grateful for the guidance of Professor Jake Wildstrom,
Professor John Layman, and Dr.~Tanya Khovanova during the research
process, for the help of an anonymous referee, Scott Kominers, Wei Lue and Dr.~Johnothon Sauer
during the publication process, and for the advice of Travis Hance,
 in improving the algorithm of my
computer program.

% Also, there % appear to be a special class of them that have similar
%descriptions to % $S_{m}$. It would be % interesting to show which
%equations have a nice description and to bound % their rate of growth.
%\include{papercopy2}

%\include{appa}
% This is appa.tex
%
% This file is for appendices.  If you don't have any appendices, delete
% the percent sign from the beginning of the next line.
%\endinput

\appendix

% Add additional appendices by replicating the line below.

\section{Appendix}
We present proofs of the Lemmas that were omitted in the main paper. 

\begin{mydef}
Let the set $S_{m}(k)$ contain the terms of $S_{m}$ that are less than
or equal to $k$. 
\end{mydef}

\subsection{Method}
We present a method that will be used repeatedly in the proofs following
Lemmas. 
 
Given integers $\alpha$ and $z$, we need to determine whether
there exist $x_{2},x_{3},\ldots,x_{m}\in
S_{m}(z)$ such that $\alpha+\displaystyle\sum_{k=2}^{m-1}{x_{k}}=(m-1)x_{m}$.

Let the set $W=\{w_{1},w_{2}\ldots,w_{s}\}$ be the set
$S_{m}(z)\backslash\{x_{2},x_{3},\ldots,x_{m-1}\}$. Then, $\alpha+\displaystyle\sum_{k=2}^{m-1}{x_{k}}=(m-1)x_{m}$ is equivalent to
\begin{align}
\label{uberLemma eq 2}
\alpha+\sum_{k=0}^{|S_{m}(z)|-1}{a_{k}}-\sum_{k=1}^{s}{w_{k}}&=(m-1)x_{m}
%\label{uberLemma eq2}
%\frac{\alpha+\displaystyle\sum_{k=0}^{|S_{m}(z)|-1}{a_{k}}-\displaystyle\sum_{k=1}^{s}{w_{k}}}{m-1}&=x_{m}
\end{align}
\subsection{Proofs}

\begin{lemma}
\label{Lemma 1 1}
If $m=2n$ for any integer $n>1$, the the only terms of $S_{m}$
less than $2n^2+3n-2$ is in $\{0,1,\ldots,2n\}$.
\end{lemma}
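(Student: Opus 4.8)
The plan is to prove Lemma~\ref{Lemma 1 1} by induction on the terms of $S_{2n}$. First I would verify directly that $\{0,1,\dots,2n\}$ are all terms of $S_{2n}$: for each $k$ with $0\le k\le 2n$, any solution $x_1+\cdots+x_{m-1}=(m-1)x_m$ with distinct $x_i$ drawn from $\{0,\dots,k\}$ would need $m=2n$ distinct values available, and since $k\le 2n$ there are at most $2n+1$ candidates; a short check (using that the distinct $x_i$ force the sum $\sum_{k=1}^{m-1}x_k$ to lie strictly between its minimum and maximum possible values unless all the $x_i$ are forced apart) shows no such configuration exists while $k\le 2n$. So $S_{2n}$ begins $0,1,\dots,2n$, i.e. $\{0,1,\dots,2n\}\subseteq S_{2n}$ and these are consecutive.

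The main content is then to show that \emph{every} integer $y$ with $2n+1\le y<2n^2+3n-2$ is \emph{excluded}, i.e. when we attempt to append $y$ we can find distinct $x_2,\dots,x_m\in\{0,1,\dots,2n,y\}$ (with $y$ playing the role of one of the $x_i$) solving $x_1+\cdots+x_{m-1}=(m-1)x_m$. Here I would use the Method from the Appendix: writing $W=S_m(z)\setminus\{x_2,\dots,x_{m-1}\}$, the equation becomes $\alpha+\sum_{k=0}^{|S_m(z)|-1}a_k-\sum_{k=1}^{s}w_k=(m-1)x_m$, which turns ``find distinct witnesses'' into ``choose a complementary subset $W$ of $\{0,\dots,2n\}$ of the right size with the right sum.'' Concretely, with $m-1=2n-1$ summands on the left, I would try to realize $y$ as one of the averaged terms or as $x_m$: e.g. put $x_1=y$ and pick $x_2,\dots,x_{m-1}$ and $x_m$ among $\{0,\dots,2n\}$. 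The sum $\sum_{k=1}^{m-1}x_k = y + (\text{sum of }2n-2\text{ distinct elements of }\{0,\dots,2n\})$ must equal $(2n-1)x_m$. Since the $2n-2$ distinct elements can be chosen to have any sum in a contiguous range (because $\{0,1,\dots,2n\}$ is an interval, subset sums of a fixed cardinality fill an interval), and since $x_m$ can range over $\{0,\dots,2n\}$, the solvable $y$ form exactly an interval; computing its endpoints gives the threshold $2n^2+3n-2$. I would also handle the symmetric roles (when $y=x_m$, which contributes the other side of the excluded interval) and check the parity/divisibility constraint $(2n-1)\mid(\text{sum})$ is always arrangeable by shifting the chosen subset by one.

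The key steps in order: (1) show $0,1,\dots,2n\in S_{2n}$; (2) set up, via the Method, the equivalence between ``$y$ is blocked'' and ``a complementary subset $W\subseteq\{0,\dots,2n\}$ of prescribed size and sum exists''; (3) prove the relevant subset-sum interval lemma for the integer set $\{0,\dots,2n\}$ (subset sums of cardinality $t$ hit every integer in $[\binom{t}{2}\text{-type min},\,\text{max}]$); (4) solve the resulting linear Diophantine condition for $x_m$ and deduce that every $y$ with $2n+1\le y\le 2n^2+3n-3$ is blocked; (5) conclude $S_{2n}\cap[0,2n^2+3n-3]=\{0,\dots,2n\}$, which is the claim (noting $c_m=2n^2+3n-2$ will then be the next term, consistent with Table~\ref{tbl: definitionTable}). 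The main obstacle I anticipate is step (4): ensuring the divisibility by $m-1=2n-1$ and the distinctness of all of $x_2,\dots,x_m$ simultaneously, especially near the boundary $y\approx 2n^2+3n-2$ where the available ``slack'' in choosing $W$ is smallest — that boundary case is presumably exactly why the constant takes the precise value $2n^2+3n-2$, so the estimates there must be done carefully rather than with crude bounds.
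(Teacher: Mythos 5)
Your plan is essentially the paper's proof: the paper likewise splits the argument into (a) verifying $\{0,\dots,2n\}\subseteq S_{2n}$ via the complementary-subset equation of the Method (where the real obstruction at $\alpha=2n$ is that $x=n(2n+1-2x_{2n})$ forces, by parity, $x=x_{2n}=n$ --- an arithmetic collision your min/max heuristic alone would not quite capture), and (b) blocking every $\alpha$ with $2n+1\le\alpha\le 2n^2+3n-3$ by exhibiting the two omitted elements $a,b$ and the value $x_{2n}$ satisfying $\alpha=2nx_{2n}+a+b-n(2n+1)$. The only difference is presentational: the paper writes explicit case-by-case formulas (setting $\alpha=pn+q$ and choosing $a,b,c$ by the parity of $p$, with a separate construction for $2n^2+n\le\alpha\le 2n^2+3n-3$) where you would invoke an interval-filling argument for fixed-cardinality subset sums, but the reduction and the distinctness bookkeeping are the same.
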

\begin{proof}
To prove Lemma \ref{Lemma 1 1}, we prove two claims.\bigskip\\
Claim 1: The first $2n+1$ terms of $S_{2n}$ are the integers from 0 to $2n$ inclusive.

The first $2n-1$ terms of $S_{2n}$ are the integers from 0 to $2n-2$
inclusive because there are not enough distinct terms less than $2n-1$
to satisfy the equation $\displaystyle\sum_{k=1}^{2n-1}{x_{k}}=(2n-1)x_{2n}$.

If we substitute $\alpha=2n-1$, $m=2n$, $z=2n-2$, and $W=\{x_{2n}\}$ into
equation \eqref{uberLemma eq 2}, we obtain $x_{2n}=\frac{2n-1}{2}$, which is not
an integer.

If we substitute $\alpha=2n$, $m=2n$, $z=2n-1$, and $W=\{x_{2n},x\}$ into
equation \eqref{uberLemma eq 2}, we obtain $n(2n+1-2x_{2n})=x$. The value of $x$ is between 0 and $2n$ only if $0\leq 2n+1-2x_{2n}\leq
2$. But since $2n+1-2x_{2n}$ is odd, $x=n=x_{2n}$ which is a
contradiction. 
\begin{comment}
We use contradiction. Let $y_{i}$ for $1\leq i\leq 2n$ be distinct
integers from $\{c:0\leq c\leq 2n\}$ and $y$ be the integer in $\{c:
0\leq c\leq 2n\}$ that is not contained in $\{y_{k}:1\leq k\leq
2n\}$. Assume that $y_{1},y_{2},\ldots,y_{2n-1}$ average
to $y_{2n}$.
\begin{align*}
\sum_{k=1}^{2n-1}{y_{k}}&=(2n-1)y_{2n}\\
\sum_{k=0}^{2n}{k}-y-y_{2n}&=(2n-1)y_{2n}\\
%%\frac{(2n)(2n+1)}{2}-y&=2ny_{2n}\\
%n(2n+1)-y&=2ny_{2n}\\
%%2n^2+n-y&=2ny_{2n}\\
%%2n^2+n-2ny_{2n}&=y\\
n(2n+1-2y_{2n})&=y
\end{align*}
The value of $y$ is between 0 and $2n$ only if $0\leq 2n+1-2y_{2n}\leq
2$. But since $2n+1-2y_{2n}$ is odd, $y=n=y_{2n}$ which is a
contradiction. 
\end{comment}
\bigskip\\
Claim 2:  For every value of $2n+1\leq \alpha\leq 2n^2+3n-3$, there are
distinct $x_{2},x_{3},\ldots,x_{2n}\in S_{2n}(2n)$ such that $\alpha+\displaystyle\sum_{k=2}^{2n-1}x_{k}=(2n-1)x_{2n}$.

We find an explicit construction for all $2n+1\leq \alpha\leq 2n^2+3n-3$. If
$2n+1\leq \alpha\leq 2n^2+n-1$, we let
$\alpha=pn+q$, where $0\leq q\leq n-1$. Plug in $m=2n$, $z=\alpha-1$, $W=\{a,b,x_{2n}\}$, and
$x_{2n}=n+c$ in equation \eqref{uberLemma eq 2}, we obtain $(p+1)n+q=2nc+a+b$.
\begin{comment}
\begin{align}
r+\displaystyle\sum_{k=1}{2n-1}{r_{k}}&=(2n-1)r_{2n+1}\nonumber\\
pn+q+\sum_{k=0}^{2n}{k}-a-b-r_{2n+1}=(2n-1)(n+c)\\
%pn+q+\frac{2n(2n+1)}{2}-a-b=2n(n+c)\\
%pn+q+2n^2+n-a-b=2n^2+nc\\
\label{eq: rickert1}
(p+1)n+q=nc+a+b
\end{align}
\end{comment}
Let $a=0$ if $p$ is odd and $a=n$ if $p$ is even. Let $b=q$ and
$c=\lfloor (p+1)/2\rfloor$ so that $(p+1)n=2nc+a$ and $q=b$, which
satisfies $(p+1)n+q=2nc+a+b$. 

Now we make sure that $x_{2n}$, $a$ and $b$ are distinct. 

If $p$ is even, then $a=n>q=b$ and $x_{2n}=n=a$ only if $c=0$. But
$p\geq 2$ so $c=\lfloor (p+1)/2\rfloor>0$. 

If $p$ is odd, then $x_{2n}=n+c>q=b$. Also $a=b=0$ only if $q=0$, in which case we need to redefine our values of $x_{2n}$, $a$ and $b$ to ensure their distinctness. If $p$ is odd and $q=0$, let $a=2n$, $b=0$ and $c=\lfloor (p+1)/2\rfloor-1$. Then, $a>x_{2n}>b$.

If $2n^2+n\leq \alpha\leq 2n^2+3n-3$, let $x_{2n}=2n$, $b=2n-1$ and
$a=\alpha-(2n^2+n-1)$. Since $a<b<x_{2n}$, $a$, $b$ and $x_{2n}$ are distinct.

From Claim 1 and Claim 2, we have proven
that the integers from $0$ to $2n$ inclusive are in $S_{2n}$ and that
the integers between $2n+1$ and $2n^2+3n-3$ inclusive are not, finishing
the proof for Lemma~\ref{Lemma 1 1}.
\end{proof}

To help prove Lemma \ref{Lemma 1 2}, we prove
Lemma~\ref{Lemma 1 1 2 1}.
\begin{lemma}
\label{Lemma 1 1 2 1}
Given the $2\leq k\leq 2n-2$ consecutive integers $y_{1}<y_{2}<\cdots<y_{k}$ between
$2n-k-1$ and $2n-2$ and
an integer $p$, we can find a set of $k$
integers that does not contain $p$ and is a subset of $S_{2n+1}(2n+1)$
such that the sum of
its elements equal to the sum of the original $k$ consecutive
integers. 
\end{lemma}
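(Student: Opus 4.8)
\textit{Proof proposal.} The plan is to do everything inside $\{0,1,\ldots,2n-1\}$. Write $W=\{2n-k-1,2n-k,\ldots,2n-2\}$ for the given $k$ consecutive integers and let $S=\sum_{i=2n-k-1}^{2n-2}i$ be their sum. Working inside $\{0,\ldots,2n-1\}$ is legitimate: no $2n+1$ distinct terms are available before $2n+1$ terms have been chosen, so the first $2n$ terms of $S_{2n+1}$ are $0,1,\ldots,2n-1$, i.e. $\{0,1,\ldots,2n-1\}\subseteq S_{2n+1}(2n+1)$. If $p\notin W$ there is nothing to do: $W$ itself is a $k$-element subset of $\{0,\ldots,2n-2\}$ with sum $S$ avoiding $p$ (note $2n-k-1\ge 1$ because $k\le 2n-2$). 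So the case to treat is $p\in W$, i.e. $2n-k-1\le p\le 2n-2$, and for this I would pass to complements.

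Put $k'=2n-1-k$, so $1\le k'\le 2n-3$. Producing $A'\subseteq\{0,\ldots,2n-1\}$ with $|A'|=k$, $p\notin A'$, $\sum A'=S$ is the same as producing $D\subseteq\{0,\ldots,2n-1\}\setminus\{p\}$ with $|D|=k'$ and $\sum D=\big(\sum_{i=0}^{2n-1}i\big)-p-S$, and then setting $A'=\{0,\ldots,2n-1\}\setminus(D\cup\{p\})$. Since the complement of $W$ in $\{0,\ldots,2n-1\}$ is $\{0,\ldots,k'-1\}\cup\{2n-1\}$, one has $\sum_{i=0}^{2n-1}i-S=\binom{k'}{2}+(2n-1)$, so the target sum for $D$ is $\binom{k'}{2}+(2n-1)-p$. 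Because $p\ge 2n-k-1=k'$ this target is small, and the natural choice
\[ D=\{0,1,\ldots,k'-2\}\cup\{v\},\qquad v=2n-2+k'-p \]
(read as $D=\{v\}$ when $k'=1$) works: one checks $k'\le v\le 2n-2$, so $v$ lies in $\{0,\ldots,2n-1\}$ and is distinct from $0,\ldots,k'-2$, and $\sum D=\binom{k'-1}{2}+v=\binom{k'}{2}+(2n-1)-p$. Hence $A'=\{0,\ldots,2n-1\}\setminus(D\cup\{p\})$ has exactly $k$ elements, sum $S$, avoids $p$, and lies in $S_{2n+1}(2n+1)$.

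The only way this can fail is $v=p$, which forces $2p=2n-2+k'$ (in particular $k'$ even); in that single coincidence I would replace $D$ by $\{0,1,\ldots,k'-3\}\cup\{k'-1\}\cup\{v-1\}$, which has the same size and sum and no longer meets $\{p\}$, since $v-1=p-1$ and $p-1\ge k'$ (equality $p-1=k'-1$ would force $k'=2n-2$, $k=1$). So I expect the proof to be essentially bookkeeping: checking $k'\le v\le 2n-2$, verifying the two identities $|A'|=k$ and $\sum A'=S$, and dispatching the lone coincidence $v=p$. The one conceptual point — the ``hard part'' in the sense of where a naive approach stalls — is precisely the passage to complements: working with $A'$ directly leads to awkward multi-element swaps exactly when $p$ is buried in the interior of the long block $W$ (which happens for large $k$), whereas the target sum for $D$ is at most $\binom{k'}{2}+(2n-1)$, small enough that a staircase $\{0,\ldots,k'-2\}$ with one raised entry always realises it.
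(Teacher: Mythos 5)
Your proposal is correct, but it takes a genuinely different route from the paper. The paper works directly on the consecutive block $\{2n-k-1,\ldots,2n-2\}$: when $p$ lies in the block it decrements the $p-y_1+1$ smallest elements and increments the same number of largest elements (or the mirror operation when $p$ exceeds the median), so the sum is preserved while the block splits into two runs that straddle $p$; the case $p=y_{i_0}$ (the median, forcing $k$ odd) gets a separate two-step adjustment using $y_1\geq 2$. You instead pass to complements inside $\{0,\ldots,2n-1\}$, reducing the problem to exhibiting a $k'$-element set of prescribed small sum $\binom{k'}{2}+(2n-1)-p$ avoiding $p$, realized by the staircase $\{0,\ldots,k'-2\}$ with one raised entry $v=2n-2+k'-p$, plus a single patch when $v=p$. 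I checked your bookkeeping: $k'\leq v\leq 2n-2$ follows from $k'\leq p\leq 2n-2$, the coincidence $v=p$ forces $k'$ even (so $k'\geq 2$ and the replacement set is well-defined), and $p=k'$ is excluded since it would force $k=1$. Both arguments stay entirely within $\{0,\ldots,2n-1\}\subseteq S_{2n+1}(2n+1)$ and never need the element $2n+1$. The paper's construction is more local and yields near-consecutive output sets (which is cosmetically closer to how the lemma is invoked via $G(k,p)$ later), while yours trades the three-way case split on the position of $p$ relative to the median for a single one-parameter family, and makes the role of the hypothesis $k\leq 2n-2$ (equivalently $k'\geq 1$ and $y_1\geq 1$) more transparent.
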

\begin{proof}
If $p$ is not one of the integers $y_{1},\ldots,y_{k}$, we are done. If not, let $y_{i_{0}}$ be the median of $\{y_{1},\ldots,y_{k}\}$. If
$p<y_{i_{0}}$, decrement the $p-y_{1}+1$ smallest integers and increment
the $p-y_{1}+1$ largest integers in $\{y_{1},\ldots,y_{k}\}$.

If $p>y_{i_{0}}$, increment the $y_{k}-p+1$ largest integers and
decrement the $y_{k}-p+1$ smallest integers in $\{y_{1},\ldots,y_{k}\}$.

If $p=y_{i_{0}}$, then $k$ must be odd, which means $k<2n-2$ and $y_{1}\geq 2$. Then, decrement the $i_{0}-1$ smallest integers and increment
the $i_{0}$ largest integers in $\{y_{1},\ldots,y_{k}\}$. Then
decrement the smallest integer $y_{1}$ again so $\{y_{1},\ldots,y_{k}\}\subset S_{2n+1}(2n+1)$. 
\end{proof}
\begin{lemma}
\label{Lemma 1 2}
If $n>3$, then $S_{2n+1}(4n^3+12n^2+5n-1)=N\cup\{3n+1\}\cup\{c+2n^2+5n:c\in N\}$, where $N=\{0,1,\ldots,2n-1\}\cup\{2n+1\}$.
\begin{comment}
\begin{align}
&\{0,1,2,\ldots,2n-1,\nonumber\\
&2n+1,\nonumber\\
&3n+1,\nonumber\\
&2n^2+5n,2n^2+5n+1,\ldots,2n^2+7n-1,\nonumber\\
&2n^2+7n+1\}.\nonumber
\end{align}
\end{comment}
\end{lemma}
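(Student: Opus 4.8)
The plan is to follow the template of the proof of Lemma~\ref{Lemma 1 1}, but with a longer chain of cases reflecting the more complicated shape of the target set $T := N\cup\{3n+1\}\cup\{c+2n^2+5n:c\in N\}$. By the greedy definition it suffices to establish two statements: \textbf{(I)} no $2n+1$ distinct elements $x_{1},\ldots,x_{2n+1}$ of $T$ satisfy $\sum_{k=1}^{2n}x_{k}=2n\,x_{2n+1}$; and \textbf{(II)} every integer $\alpha$ with $0\le\alpha\le 4n^3+12n^2+5n-1$ and $\alpha\notin T$ can be written as $\alpha+\sum_{k=2}^{2n}x_{k}=2n\,x_{2n+1}$ with $x_{2},\ldots,x_{2n+1}$ distinct elements of $T$ smaller than $\alpha$. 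Statement (I) forces every element of $T$ to survive the greedy process (when $a_{k}\in T$ is appended, the chosen terms so far are exactly $T\cap[0,a_{k}-1]$, and (I) applied to this subset shows no solution is created), while (II) forces every integer in $[0,c_{2n+1}-1]$ outside $T$ to be killed; together they give $S_{2n+1}\cap[0,c_{2n+1}-1]=T$, which is the claim (and which also makes condition (i) of Theorem~\ref{conditionTheorem} hold on the nose).

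For (II) I would recycle the mechanism behind equation~\eqref{uberLemma eq 2}: writing the other $2n$ chosen terms as ``all of an available pool minus a few excluded elements, with one singled out as the average $v=x_{2n+1}$'', the equation collapses to a single congruence modulo $2n+1$ that pins down the excluded element(s) and forces $v$. I would partition $[0,c_{2n+1}-1]\setminus T$ into the ranges $\{2n\}$, $\{2n+2,\ldots,3n\}$, the middle gap $(3n+1,\,2n^2+5n)$, and the upper gap $(2n^2+7n+1,\,4n^3+12n^2+5n)$ together with the single value $2n^2+7n$. In each range I would exhibit the excluded/average data explicitly: for the first two ranges the pool $S_{2n+1}(2n+1)=\{0,\ldots,2n-1,2n+1\}$ already suffices (for $\alpha\in\{2n+2,\ldots,3n\}$ one excludes $e=\alpha-2n$ and takes $v=n+1$), while for the two large gaps one enlarges the pool to include $3n+1$ and then the entire block $\{2n^2+5n,\ldots,2n^2+7n-1,2n^2+7n+1\}$ as these elements become available, parametrising $\alpha$ by quotient and remainder relative to suitable moduli (as in Claim~2 of Lemma~\ref{Lemma 1 1}) and invoking Lemma~\ref{Lemma 1 1 2 1} whenever a prescribed summand would collide with the average or with a value that must be avoided.

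For (I) I would argue by bounding. Given a putative in-$T$ solution, split on which ``layer'' of $T$ --- the low part $N\cup\{3n+1\}$ or the high block near $2n^2+5n$ --- the average and the summands lie in. Sums of $2n$ distinct elements of the low part are far too small to equal $2n\,x_{2n+1}$ when $x_{2n+1}$ lies in the high block; conversely, if any summand lies in the high block the left side is too large unless $x_{2n+1}$ also lies there, in which case subtracting $2n^2+5n$ from the high-block coordinates reduces the problem to a small finite residue equation, which one checks has no solution by the same mod-$(2n+1)$ argument. The single value $2n^2+7n$ is handled the same way: a solution using it would be forced to use a near-complete top block, and the residue equation leaves no room for it.

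The main obstacle is the safety of the top-block elements in (I) --- equivalently, that the greedy algorithm really accepts $2n^2+5n,\ldots,2n^2+7n-1$ and $2n^2+7n+1$: by the time it reaches them the full pool $T\cap[0,2n^2+7n)$ of $4n+2$ elements is available, so one must rule out \emph{every} selection of $2n$ of them (with the new block element either as a summand or as the average) from satisfying the equation, which requires the layer-by-layer bounding together with the residue computation above. The gap constructions in (II), while lengthy, are essentially routine extensions of what was already done for Lemma~\ref{Lemma 1 1}. The hypothesis $n>3$ is what makes the successive pools large enough (and keeps the ranges genuinely nested) for all the constructions to go through, which is why the cases $m=5,7$ are instead checked by computer.
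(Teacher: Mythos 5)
Your plan is essentially the paper's own proof: the paper also verifies the two halves you call (I) and (II) — it walks through $[0,4n^3+12n^2+5n-1]$ showing each element of the target set survives (via the layer decomposition at $2n^2+5n$ and a residue equation in $S_{2n+1}(2n+1)$) and each outsider is killed by an explicit pool-minus-excluded-elements solution of the type in equation~\eqref{uberLemma eq 2}, partitioned into exactly the ranges you list and using Lemma~\ref{Lemma 1 1 2 1} to dodge collisions. The only difference is that the paper actually carries out the "routine" constructions you defer, in several tables parametrising $\alpha$ by quotients and remainders, which is where nearly all of its length lies.
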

\begin{proof}
 We start with $a_{0}=0$ and generate the terms to
show they are the terms listed in Lemma~\ref{Lemma 1 2}.

The integers $0,1,\ldots,2n-1$ must be in the $S_{2n+1}$ because there
are not $2n+1$ distinct terms in the
sequence, which means there cannot be distinct terms
$x_{1},x_{2},\ldots,x_{2n+1}$  that satisfy
\begin{equation}
\label{eq: oddrLemmaCondition}
\sum_{k=1}^{2n}{x_{k}}=2nx_{2n+1}.
\end{equation}
Now we show $2n$ cannot be a term of $S_{2n+1}$. If $2n$
were a term of $S_{2n+1}$, we can find a solution for equation \eqref{eq:
  oddrLemmaCondition} by letting $x_{2n+1}=n$ and $x_{k}=k-1$ if $k\leq
n$ and $x_{k}=k$ if $k\geq n+1$.
\begin{comment}
\[ x_{k} = \left\{ \begin{array}{ll}
         k-1 & \mbox{if $k\leq n$};\\
        k & \mbox{if $k\geq n+1$}.\end{array} \right. \] 
\end{comment}

We use contradiction to prove that $2n+1$ is the next term. If we let
$\alpha=2n+1$, $m=2n+1$, $z=2n$ and $W=\{x_{2n+1}\}$ in equation \eqref{uberLemma eq 2}, we obtain
$x_{2n+1}=n+1/(2n+1)$, which is not an integer. 

We show $3n+1$ is the next term in $S_{2n+1}$. In \eqref{uberLemma eq
  2}, let $\alpha=2n+x$ where $2\leq
x\leq n$, $m=2n+1$, $z=\alpha-1$ and $W=\{x,x_{2n+1}\}$. Then, we obtain
$x_{2n+1}=n+1$, which is in $S_{2n+1}(\alpha-1)$. 

To prove that $3n+1$ is the next term, we again use contradiction. In
equation \eqref{uberLemma eq 2}, let $\alpha=3n+1$, $m=2n+1$, $z=3n$ and
$W\{x,x_{2n+1}\}$. Then, we obtain $n+1+(n+1-x)/(2n+1)=x_{2n+1}$.

Since $0\leq x<2n+1$, the only way $n+1-x$ can be a multiple of $2n+1$ is if
$x=n+1$. But then $x_{2n+1}=n+1=x$, which is a contradiction.

Now, we show that given any $3n+2\leq \alpha<2n^2+5n-1$, we
can find distinct $x_{2},x_{3},\ldots,x_{2n+1}\in S_{2n+1}(3n+1)$
such that $\alpha+\displaystyle\sum_{k=2}^{2n} {x_{k}}=(2n)x_{2n+1}$.

In equation \eqref{uberLemma eq 2}, let $m=2n+1$, $z=\alpha-1$, and $W=\{x,y, x_{2n+1}\}$. Then
we obtain $n+1+\frac{n+1+\alpha-x-y}{2n+1}=x_{2n+1}$.
%as \eqref{eq: annoying equation 1} implies that
%\begin{align*}
%n+1+\frac{n+1+d-x-y}{2n+1} &=x_{2n+1}\\
%d+2n^2+4n+2-x-y&=(2n+1)x_{2n+1}\\
%d+(3n+1)+n(2n-1)+(2n+1)-x-y&=(2n+1)x_{2n+1}\\
%d+((3n+1)+(2n+1)+\sum_{k=0}^{2n-1}{k}-x_{2n+1}-x-y)&=(2n)x_{2n+1}\\
%d+\sum_{k=2}^{2n}{x_{k}}&=(2n)x_{2n+1}.
%\end{align*}
%If $d=3n+2$, then let $x_{2n+1}=n+3$, $x=0$ and $y=1$, so that
%\begin{align*}
%n+1+\frac{n+1+d-x-y}{2n+1}%&=n+1+\frac{n+1+(3n+2)-0-1}{2n+1}\\
%&=n+1+\frac{4n+2}{2n+1}\\
%&=n+3\\
%&=x_{2n+1}.
%\end{align*}
For every $3n+2\leq \alpha\leq 2n^2-2n-2$, let $\alpha=(2n+1)A+B$, where $2\leq
A\leq n-2$ and $-n\leq B\leq n$. We present the solutions for
$x_{2n+1}$, $x$ and $y$ given $\alpha$ in Table \ref{tbl: Odd Lemma Revised
  Table 1}.
\begin{table}[hbtp]
\begin{center}
\begin{tabular}{c|c|c|c|c}\hline
$\alpha$ & $x_{2n+1}$ & $x$ & $y$&\\ \hline
$A(2n+1)+B$ & $n+1+A$ & 0 & $n+1+B$&$B\leq n-2$, $B\neq A$\\
$A(2n+1)+B$ & $n+1+A$ & 1 & $n+B$&$B=A$, $B\leq n-2$\\
$A(2n+1)+B$ & $n+1+A$ & $B-n+2$ & $2n-1$&$n-1\leq B\leq n$, $A<n-2$\\
$A(2n+1)+B$ & $n+1+A$ & $B-n+3$ & $2n-2$&$n-1\leq B\leq n$, $A=n-2$\\\hline
\end{tabular}
\end{center}
\caption{If $3n+2\leq \alpha\leq 2n^2-2n-2$}\label{tbl: Odd Lemma Revised Table 1}
\end{table}
For every $2n^2-2n+1\leq \alpha\leq 2n^2+5n-1$, let $\alpha=2n^2+C$, where
$-2n+1\leq C\leq 5n-1$. We present the solutions in Table \ref{tbl: Odd Lemma Revised Table 2}.
\begin{table}[hbtp]
\begin{center}
\begin{tabular}{c|c|c|c|c}\hline
$\alpha$ & $x_{2n+1}$ & $x$ & $y$&\\ \hline
$2n^2+C$ & $2n-1$ & $C+2n+5$ & $2n-2$&$-2n-1\leq C\leq -8$\\
$2n^2+C$ & $2n-1$ & $2n-5$ & $2n+1$& $C=-7$\\
$2n^2+C$ & $2n-1$ & $C+n+2$ & $3n+1$&$-6\leq C\leq n-4$\\
$2n^2+C$ & $2n+1$ & $0$ & $C+1$&$n-3\leq C\leq 2n-2$\\
$2n^2+C$ & $2n+1$ & $C-2n+2$ & $2n-1$&$2n-1\leq C\leq 4n-4$\\
$2n^2+C$ & $2n+1$ & $C-3n$ & $3n+1$&$4n-3\leq C\leq 5n-1$\\\hline
\end{tabular}
\end{center}
\caption{If $2n^2-2n-1\leq \alpha\leq 2n^2+5n-1$}\label{tbl: Odd Lemma Revised Table 2}
\end{table}

We show that $2n^2+5n, 2n^2+5n+1,\ldots,2n^2+7n-1, 2n^2+7n+1$ are
the next terms in $S_{2n+1}$ by contradiction. Let
$\alpha\in\{2n^2+5n+c:0\leq c\leq 2n-1\}\cup\{2n^2+7n+1\}$. Assume that there are terms
$x_{2},x_{3},\ldots,x_{2n+1}$ in the sequence, each less than $\alpha$ such
that
\begin{equation}
\label{eq: Odd R Lemma Assumption 1}
\alpha+\sum_{k=2}^{2n}{x_{k}}=(2n)x_{2n+1}.
\end{equation}
We prove that $x_{2n+1}\geq 2n^2+5n$, also by contradiction. Assume that
$x_{2n+1}<2n^2+5n$. If $\alpha$ is the only integer
among $\alpha$, $x_{2},x_{3},\ldots,x_{2n}$ that is greater than or equal to
$2n^2+5n$, then the minimum value for $x_{2n+1}$ is $x_{2n+1}\geq
\frac{2n^2+5n+\sum_{k=0}^{2n-2}{k}}{2n}=2n+1+\frac{1}{2n}
$, which is greater than $2n+1$. So $x_{2n+1}$ can only be $3n+1$. But
since $x_{2},x_{3},\ldots,x_{2n}$ cannot be $3n+1$, by equation \eqref{eq: Odd
  R Lemma Assumption 1},
$3n+1\leq\frac{(2n^2+7n+1)+(2n+1)+\sum_{k=2}^{2n-1}{k}}{2n}=
2n+4+\frac{1}{2n}$, which is a contradiction because $n>3$. If at least one of the integers
$x_{2},x_{3},\ldots,x_{2n}$ are greater than or equal to $2n^2+5n$, then
by equation \eqref{eq: Odd R Lemma Assumption 1} $x_{2n+1}\geq \frac{(2n^2+5n)+(2n^2+5n+1)+\sum_{k=0}^{2n-3}{k}}{2n}= 3n+2+\frac{n+4}{2n}$,
which cannot occur because there are no terms between $3n+2$ and $2n^2+5n-1$ inclusive. Therefore
$x_{2n+1}\geq 2n^2+5n$. 

Let $\alpha=M+r$ such $M$ is $2n^2+5n$ and $r\in S_{2n+1}(2n+1)$ and
$x_{i}=M_{i}+r_{i}$, where $M_{i}$ is 0 or $2n^2+5n$ and
$r_{i}\in S_{2n+1}(3n+1)$. Also,
$r_{i}$ can be $3n+1$ only if $M_{i}=0$. Then,
\begin{align*}
%\alpha+\sum_{k=2}^{2n}{x_{k}}&=(2n)x_{2n+1}\\
M+r+\sum_{k=2}^{2n}M_{k}+\sum_{k=2}^{2n}r_{k}&=(2n)M_{2n+1}+(2n)r_{2n+1}.
\end{align*}
Since $x_{2n+1}\geq 2n^2+5n$, $M_{2n+1}=2n^2+5n$. The maximum value of
$r+\displaystyle\sum_{k=2}^{2n}{r_{k}}-2nr_{2n+1}$ is less or equal to than twice the sum of the
$n$ largest elements of $S_{2n+1}(3n+1)$, since the minimum value of $2nr_{2n+1}$ is 0 and no three elements of $\{r,r_{2},\ldots,r_{2n}\}$ can be pairwise equal. Otherwise, two elements of $\{\alpha,x_{2},\ldots,x_{2n}\}$ must be equal. 
\begin{comment}
If $r_{a}=r_{b}=r_{c}$ for some $2\leq a,b,c\leq 2n$, then with three
integers $r_{a},r_{b},r_{c}$ and two values for $M_{a}$, $M_{b}$ and $M_{c}$, two of the
integers from the set $\{M_{a},M_{b},M_{c}\}$ must be equal, which
implies that two of the elements of $\{x_{a},x_{b},x_{c}\}$
are equal, contradicting their distinctness. 
\end{comment}

So the maximum value of the difference is
$2\left((3n+1)+(2n+1)+\displaystyle\sum_{k=n+2}^{2n-1}{k}\right)-2n\cdot 0=3n^2+5n+2$.

Since $3n^2+5n+2<2(2n^2+5n)$, at most one of elements of $\{M_{k}:2\leq
k\leq 2n\}$ can be 0, or else the difference $r+\displaystyle\sum_{k=2}^{2n}{r_{k}}-2nr_{2n+1}$ is less than $2nM_{2n+1}-M-\displaystyle\sum_{k=2}^{2n}{M_{k}}$.
%Therefore, the integers between $2n^2+5n$ and $2n^2+7n-1$ inclusive are
%terms of $S_{2n+1}$. 
If $\alpha<2n^2+7n-1$, there are not $2n-1$ distinct integers between $2n^2+5n$ and $\alpha-1$ inclusive,
which means $\alpha$ is in $S_{2n+1}$. If $\alpha=2n^2+7n-1$, and not
all $\{M_{k}:2\leq k\leq 2n\}$ are equal to $2n^2+5n$, then the
maximum value for $x_{2n+1}$ would be $x_{2n+1}\leq
\frac{\sum_{k=1}^{2n-1}{2n^2+5n+k}+(3n+1)}{2n}=
2n^2+5n-\frac{3n-1}{2n}$, which is less than $2n^2+5n$, contradicting the assumption that $x_{2n+1}\geq 2n^2+5n$. 

The integer $2n^2+7n$ is not in $S_{2n+1}$ because equation \eqref{eq:
  oddrLemmaCondition} is satisfied if we let $x_{2n+1}=2n^2+6n$ and
$x_{k}=2n^2+5n+k-1$ if $k\leq n$ and $x_{k}=2n^2+5n+k$ if $k\geq n+1$.
\begin{comment}
\[ x_{k} = \left\{ \begin{array}{ll}
         2n^2+5n+k-1 & \mbox{if $k\leq n$};\\
        2n^2+5n+k & \mbox{if $k\geq n+1$}.\end{array} \right. \] 
\end{comment}
%Then,
%\begin{align*}
%\sum_{k=1}^{2n}{x_{k}}&=\sum_{k=1}^{n}{2n^2+5n+k-1}+\sum_{k=n+1}^{2n}{2n^2+5n+k}\\
%&=\frac{((2n^2+6n-1)+(2n^2+5n))n}{2}+\frac{((2n^2+6n+1)+2n^2+7n)n}{2}\\
%&=\frac{((2n^2+7n+2n^2+5n)+(2n^2+6n+1+2n^2+6n-1))n}{2}\\
%&=\frac{((4n^2+12n)+(4n^2+12n))n}{2}\\
%&=(4n^2+12n)n\\
%&=(2n^2+6n)(2n)\\
%&=2nx_{2n+1}.
%\end{align*}

If $\alpha=2n^2+7n+1$ and not all elements of $\{M_{k}:2\leq k\leq 2n\}$ are
$2n^2+5n$, then the maximum value for $x_{2n+1}$ is $x_{2n+1}\leq \frac{(2n^2+7n+1)+\left(\sum_{k=2}^{2n-1}{2n^2+5n+k}\right)+(3n+1)}{2n}=2n^2+5n-\frac{n-1}{2n}$,
which is less than $2n^2+5n$, contradicting $x_{2n+1}\geq 2n^2+5n$. If
all $M_{2},M_{3},\ldots,M_{2n}\in\{2n^2+5n\}$, then by assumption \eqref{eq: Odd R Lemma Assumption 1},
\begin{align}
%\alpha+\sum_{k=2}^{2n}{x_{k}}&=(2n)x_{2n+1}\nonumber\\
M+r+\sum_{k=2}^{2n}{M_{k}}+\sum_{k=2}^{2n}{r_{k}}&=(2n)M_{2n+1}+(2n)r_{2n+1}\nonumber\\
\label{eq: equation in r1}
r+\sum_{k=2}^{2n}{r_{k}}&=(2n)r_{2n+1}.
\end{align}
But $r$, $r_{2},r_{3},\ldots,r_{2n+1}$ are distinct elements of
$S_{2n+1}(2n+1)$, so equation \eqref{eq: equation in r1} has no solutions and $2n^2+7n+1$ is
in the sequence.%RESUME HERE

We now show that $\{c: 2n^2+7n+2\leq c\leq 4n^3+12n^2+5n-1\}\cap S_{2n+1}(4n^3+12n^2+5n-1)=\emptyset$. So given any $2n^2+7n+2\leq \alpha\leq
4n^3+12n^2+5n-1$, we show that there are distinct
$x_{2},x_{3},\ldots,x_{2n+1}$ in the sequence such that $\alpha+\displaystyle\sum_{k=2}^{2n}{x_{k}}=(2n)x_{2n+1}$.

For ease of notation, we represent the integers $x_{2},x_{3},\ldots,x_{2n}$ with the two sets
$U=\{u_{1},u_{2},\ldots,u_{p}\}$ and
$V=\{v_{1},v_{2},\ldots,v_{q}\}$. The set $U$ contains the elements of $\{x_{2},x_{3},\ldots,x_{2n}\}$ that are greater than or equal to
$2n^2+5n$, with $2n^2+5n$ subtracted from each those integers. The set
$V$ contains the elements of $\{x_{2},x_{3},\ldots,x_{2n}\}$ that are
less than $2n^2+5n$. All elements in set $U$ must be in $S_{2n+1}(2n+1)$ and all elements in set $V$ must
be in $S_{2n+1}(3n+1)$. %For example, if $\{x_{2},x_{3},\ldots,x_{2n}\}$ is
%the set
%\begin{equation}
%\{2n+1, 2n^2+5n+2\}\cup\{2n^2+5n+4,2n^2+5n+5\ldots,2n^2+7n-1\}\cup\{2n^2+7n+1\},\nonumber
%\end{equation}
%then we express it as
%\begin{align*}
%U&=\{2\}\cup\{4,5,\ldots,2n-1\}\cup\{2n+1\}\\
%V&=\{2n+1\}.
%\end{align*} 
We can express 
\begin{equation}
\label{eq: Odd Lemma r sum}
\alpha+\sum_{k=2}^{2n}{x_{k}}=\alpha+\sum_{k=1}^{p}{u_{k}}+\sum_{k=1}^{q}{v_{k}}+|U|(2n^2+5n),
\end{equation}
which implies that
\begin{equation}
x_{2n+1}=\frac{\alpha+\sum_{k=1}^{p}{u_{k}}+\sum_{k=1}^{q}{v_{k}}+|U|(2n^2+5n)}{2n}.\nonumber
\end{equation}

%If $\alpha=2n^2+7n+2$, let 
%\begin{align*}
%U&=\{2\}\cup\{4,5,\ldots,2n-1\}\cup\{2n+1\}\\
%V&=\{2n+1\}.
%\end{align*}
%Then,
%\begin{align*}
%x_{2n+1}&=\frac{\alpha+\sum_{k=2}^{2n}{x_{k}}}{2n}\\
%&=\frac{2n^2+7n+2+\sum_{k=1}^{p}{u_{k}}+\sum_{k=1}^{q}{v_{k}}+|U|(2n^2+5n)}{2n}\\
%&=\frac{2n^2+7n+2+(\sum_{k=4}^{2n-1}{k}+2+2n+1)+2n+1+(2n-2)(2n^2+5n)}{2n}\\
%&=\frac{2n^2+7n+2+\frac{(2n+3)(2n-4)}{2}+2+2n+1+2n+1+4n^3+6n^2-10n}{2n}\\
%&=\frac{4n^3+10n^2}{2n}\\
%&=2n^2+5n
%\end{align*}
%If $\alpha=2n^2+7n+3$, then let
%\begin{align*}
%U&=\{3,4,\ldots,2n-1\}\cup\{2n+1\}\\
%V&=\{2n-1\}.
%\end{align*}
%The value $x_{2n+1}$ remains constant because the value of $\alpha$ increased
%by 1 and the sum of the elements in both sets $U$ and $V$ decreased by 1
%so that the sum \eqref{eq: Odd Lemma r sum} remains constant.
The solutions for $\{x_{k}:2\leq k\leq 2n+1\}$ for all $2n^2+7n+2\leq \alpha\leq
2n^2+11n-1$ are displayed in Table \ref{tbl: Odd Lemma Table 9}.
\begin{table}[hbtp]
\begin{center}
\begin{tabular}{c|c|c|c|c}\hline
$\alpha$ & $U$ & $V$ & $x_{2n+1}$\\ \hline
$2n^2+7n+2$ & $S_{2n+1}(2n+1)\backslash\{0,1,3\}$ & $\{2n+1\}$ & $2n^2+5n$&\\
$2n^2+7n+3$ & $S_{2n+1}(2n+1)\backslash\{0,1,2\}$ & $\{2n-1\}$ & $2n^2+5n$&\\
$2n^2+7n+4$ & $S_{2n+1}(2n+1)\backslash\{0,1,3\}$ & $\{2n-1\}$ &
$2n^2+5n$&\\
$2n^2+C$ &\begin{tabular}{c} $S_{2n+1}(2n+1)\backslash$\\$\{0,2,C-7n-2\}$\end{tabular} & $\{2n-1\}$ &
$2n^2+5n$&\begin{tabular}{c}$7n+5\leq C$\\$\leq 9n+1 $\end{tabular}\\
$2n^2+C$ & \begin{tabular}{c}$S_{2n+1}(2n+1)\backslash$\\$\{0,C-9n-1,2n+1\}$\end{tabular} & $\{2n-1\}$ &
$2n^2+5n$&\begin{tabular}{c}$9n+2\leq C$\\$\leq 11n-1$\end{tabular}\\\hline
\end{tabular}
\end{center}
\caption{If $2n^2+7n+2\leq \alpha\leq 2n^2+11n-1$}\label{tbl: Odd Lemma Table 9}
\end{table}
By Lemma~\ref{Lemma 1 1 2 1}, we can define $G(k,p)$ as a subset of $k$
elements of $S_{2n+1}(2n+1)$ that has the same sum as the consecutive integers
between $2n-k-1$ and $2n-2$ inclusive and does not contain the integer
$p$. Since Lemma~\ref{Lemma 1 1 2 1} only applies to when $2\leq k\leq 2n-2$, we need to define $G(k,p)$ for when $k=0$ or $1$. Let $G(0,p)=\{\}$, $G(1,p)=2n-2$ for all $p\neq 2n-2$, and $G(1,2n-2)$ be undefined. 

Also, if $T=\{t_{i}:0\leq i\leq j\}$ is a set of distinct nonnegative integers
arranged in increasing order, we define $H(T)$ to take the smallest
value of $t_{i}>i$ and decrement it. Let $H^{(k)}(T)$ denote applying
the function $H$ to $T$ $k$ times and $\left[n\right]$ be the set containing the
integers from 1 to $n$ inclusive.

The solutions for $\{x_{k}:2\leq k\leq 2n+1\}$ for all $2n^2+11n\leq
\alpha\leq 4n^3+12n^2+n-1$ are displayed in Table \ref{tbl: Odd Lemma Revised
  Table 3}. There may be multiple ways to express $\alpha$ as $2n^2+11n+(2n^2+5n)A+2nB+C$, in which case there are multiple solutions shown. Notice that we cannot have $A=2n-3$ and $B=2n-2$ at the same time, as $G(1,2n-2)$ is undefined. To correct this, we let $A=2n-2$, $B=n-4$, and let $V=H^{(C+n)}(\{2,\ldots,2n-1\}\cup\{2n+1\})$. 
\begin{table}[hbtp]
\begin{center}
\begin{tabular}{c|c|c|c|c}\hline
$\alpha$ & $U$ & $V$ & $x_{2n+1}$\\ \hline
\begin{tabular}{c}$2n^2+11n$\\$+(2n^2+5n)A$\\$+2nB+C$\end{tabular}
& \begin{tabular} {c} $G(2n-2-A$\\$,B)$\end{tabular} &
\begin{tabular}{c}$H^{(C)}(\left[A\right]\cup$\\$\{2n-1\})$\end{tabular} & \begin{tabular}{c}$2n^2+5n$\\$+B$\end{tabular}& \begin{tabular}{c}$0\leq B\leq 2n-1$,\\ $0\leq
  C\leq 2n-1$,\\$0\leq A\leq 2n-2$\end{tabular}\\\hline
\end{tabular}
\end{center}
\caption{If $2n^2+11n\leq \alpha\leq 4n^3+12n^2+n-1$ and $A=2n-3$ and $B=2n-2$ are not true at the same time}\label{tbl: Odd Lemma
  Revised Table 3}
\end{table}

\begin{comment}
\begin{table}[hbtp]
\begin{center}
\begin{tabular}{c|c|c|c|c}\hline
$\alpha$ & $U$ & $V$ & $x_{2n+1}$\\ \hline
\begin{tabular}{c}$2n^2+11n$\\$+B(2n)+C$\end{tabular} & $G(2n-2,B)$ &
$\{2n-1-C\}$ &\begin{tabular}{c}$2n^2+5n$\\$+B$\end{tabular}& \begin{tabular}{c}$0\leq B\leq n-4$,\\ $0\leq
  C\leq 2n-1$\end{tabular}\\\hline
\begin{tabular}{c}$2n^2+11n$\\$+A(2n(n+3)-n)$\\$+B(2n)+C$\end{tabular}
& \begin{tabular} {c} $G(2n-2-A$\\$,B)$\end{tabular} &
\begin{tabular}{c}$H^{(C)}(N_{A}\cup$\\$\{2n-1\})$\end{tabular} & \begin{tabular}{c}$2n^2+5n$\\$+B$\end{tabular}& \begin{tabular}{c}$n-3\leq B\leq 2n-1$,\\ $0\leq
  C\leq 2n-1$,\\$0\leq A\leq 2n-2$\end{tabular}\\\hline
\end{tabular}
\end{center}
\caption{If $2n^2+11n\leq \alpha\leq 4n^3+12n^2+n-1$}\label{tbl: Odd Lemma
  Revised Table 3}
\end{table}
\end{comment}

We now present Table \ref{tbl: Odd Lemma Table 16} giving a solution for
every $4n^3+12n^2+n\leq \alpha\leq 4n^3+12n^2+5n-1$.
\begin{table}[hbtp]
\begin{center}
\begin{tabular}{c|c|c|c|c}\hline
$\alpha$ & $U$ & $V$ & $x_{2n+1}$\\ \hline
$4n^3+12n^2+n+C$ & $\emptyset$  & $S_{2n+1}(2n+1)\backslash\{0,C+1\}$ &
$2n^2+7n+1$&$0\leq C\leq2n-2$\\
$4n^3+12n^2+n+C$ & $\emptyset$  & $S_{2n+1}(2n+1)\backslash\{1,2n-1\}$ &
$2n^2+7n+1$&$C=2n-1$\\
%$4n^3+12n^2+n+C$ & $\emptyset$  & $S_{2n+1}(2n+1)\backslash\{0,2n+1\}$ &
%$2n^2+7n+1$&$C=2n$\\
$4n^3+12n^2+n+C$ & $\emptyset$  & $S_{2n+1}(2n+1)\backslash\{C-2n,2n+1\}$ &
$2n^2+7n+1$&$2n\leq C\leq 4n-1$\\\hline
\end{tabular}
\end{center}
\caption{If $4n^3+12n^2+n\leq \alpha\leq 4n^3+12n^2+5n-1$}\label{tbl: Odd
  Lemma Table 16}
\end{table}

Since we have worked from 0 to $4n^3+12n^2+5n-1$ and tested if each
integer in that range is in $S_{2n+1}$ and found that the results match
the statement in Lemma~\ref{Lemma 1 2}, our proof is complete. 
\end{proof}

To prove Lemma \ref{Lemma 1 3.5}, we need to prove Lemma~\ref{Lemma 1 3.5 1}
and Lemma~\ref{Lemma 1 3.5 3}. 

\begin{lemma}
\label{Lemma 1 3.5 1}
If $m=2n$, given any $\alpha\in\{0,1,\ldots,2n\}$, we can find distinct
$x_{2},x_{3},\ldots,x_{2n}\in S_{2n}(2n)$ such that $\alpha+\displaystyle\sum_{k=2}^{2n-1}{x_{k}}=(2n-1)x_{2n}$.
\end{lemma}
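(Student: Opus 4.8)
The plan is to build everything on the identity $S_{2n}(2n)=\{0,1,\ldots,2n\}$, which is exactly Claim~1 in the proof of Lemma~\ref{Lemma 1 1}. So I get to pick $x_{2},\ldots,x_{2n}$ as $2n-1$ distinct elements of a $(2n+1)$-element set, and this is the same as choosing the value $c:=x_{2n}$ together with the two elements $a\neq b$ of $\{0,\ldots,2n\}$ that are \emph{not} used among $x_{2},\ldots,x_{2n}$. Once those choices are fixed, $\sum_{k=2}^{2n-1}x_{k}=\bigl(\sum_{j=0}^{2n}j\bigr)-a-b-c=n(2n+1)-a-b-c$, so the target equation $\alpha+\sum_{k=2}^{2n-1}x_{k}=(2n-1)x_{2n}$ becomes, after simplification, $a+b=\alpha+2n^{2}+n-2nc$.

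First I would fix $c=n$; then $n\in\{0,\ldots,2n\}$ automatically and the requirement collapses to finding distinct $a,b\in\{0,1,\ldots,2n\}\setminus\{n\}$ with $a+b=\alpha+n$. I would then split on $\alpha$: for $1\leq\alpha\leq n$ take $\{a,b\}=\{0,\alpha+n\}$; for $n+1\leq\alpha\leq 2n-1$ take $\{a,b\}=\{2n,\alpha-n\}$; for $\alpha=0$ take $\{a,b\}=\{1,n-1\}$; and for $\alpha=2n$ take $\{a,b\}=\{n+1,2n-1\}$. In each case the routine checks are that $a\neq b$, that neither equals $n$ (so $c=n$ is genuinely available as the value $x_{2n}$), and that both lie in $\{0,\ldots,2n\}$; then $x_{2},\ldots,x_{2n-1}$ are simply the remaining $2n-2$ elements of $\{0,\ldots,2n\}\setminus\{a,b,c\}$, which are distinct and in $S_{2n}(2n)$, and the sum identity holds by the algebra above.

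The only obstacle — and it is a small one — is that the constructions for $\alpha=0$ and $\alpha=2n$ degenerate exactly when $n=2$ (there $1=n-1$ and $n+1=2n-1$), i.e. for $m=4$ with $\alpha\in\{0,4\}$. I would finish by dispatching these two instances by hand: $(x_{2},x_{3},x_{4})=(0,3,1)$ works for $\alpha=0$ since $0+0+3=3\cdot 1$, and $(x_{2},x_{3},x_{4})=(1,4,3)$ works for $\alpha=4$ since $4+1+4=3\cdot 3$. Since $n=2$ is the only value of $n$ for which the generic argument breaks, this completes the proof.
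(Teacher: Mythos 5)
Your proof is correct and takes essentially the same route as the paper: both reduce, via the complement of $\{x_{2},\ldots,x_{2n}\}$ in $S_{2n}(2n)=\{0,\ldots,2n\}$, to solving $a+b=\alpha+n(2n+1)-2nx_{2n}$ for the two omitted elements $a,b$ and the value $x_{2n}$, and then exhibit explicit witnesses by cases on $\alpha$. The only difference is cosmetic: the paper lets $x_{2n}$ range over $\{n-1,n,n+1\}$ and so never hits a degenerate case, whereas you fix $x_{2n}=n$ and correctly patch the two resulting $n=2$ instances by hand.
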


\begin{proof}
In equation \eqref{uberLemma eq 2}, let $m=2n$, $z=2n$, and $W=\{a,b\}$. Then, we
obtain $\alpha=n(2x_{2n}-2n-1)+a+b$.

We display the solutions for $0\leq \alpha\leq 2n$ in Table \ref{tbl: Revised Lemma 1
  3.5 1 Table 1}.
\begin{table}[hbtp]
\begin{center}
\begin{tabular}{c|c|c|c|c}\hline
$a$ & $b$ & $x_{2n}$\\ \hline
$n$ & $2n$& $n-1$& $\alpha=0$\\
$0$ & $n+\alpha$ & $n$ & $1\leq \alpha\leq n$\\
$0$ & $\alpha-n$ & $n+1$ & $n+1\leq \alpha\leq 2n$\\\hline
\end{tabular}
\end{center}
\caption{If $0\leq \alpha\leq 2n$}\label{tbl: Revised Lemma 1
  3.5 1 Table 1}
\end{table}

Since we have covered all the values for $\alpha$ from 0 to $2n$ inclusive,
we are done with the proof of Lemma \ref{Lemma 1 3.5 1}. 
\end{proof}

To prove Lemma \ref{Lemma 1 3.5 3}, we use of the following result.

\begin{lemma}
\label{Lemma 1 3.5 2}
Given any $\alpha\in S_{2n+1}(2n+1)$, we can find
$x_{2},x_{3},\ldots,x_{2n+1}\in S_{2n+1}(2n+1)$ such that $x_{2},x_{3},\ldots,x_{2n}$
are distinct and $\alpha+\displaystyle\sum_{k=2}^{2n}{x_{k}}=2nx_{2n+1}$.
\end{lemma}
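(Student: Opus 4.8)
The plan is to pin down $S_{2n+1}(2n+1)$ exactly and then convert the problem into a tiny numerical question about two ``slack'' values. By Lemma~\ref{Lemma 1 2}, the terms of $S_{2n+1}$ that are at most $2n+1$ form precisely the set $N=\{0,1,\ldots,2n-1\}\cup\{2n+1\}$ (the three larger families $\{3n+1\}$ and $\{c+2n^2+5n:c\in N\}$ contribute nothing below $2n+2$). This is a set of exactly $2n+1$ integers, with $\sum_{x\in N}x=2n^2+n+1$, and it is \emph{not} an interval: the value $2n$ is absent. Throughout I work in the range $n>3$ in which Lemma~\ref{Lemma 1 2} applies.

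Since $x_2,\ldots,x_{2n}$ are to be $2n-1$ \emph{distinct} elements of the $(2n+1)$-element set $N$, choosing them is the same as choosing the complementary $2$-element set $W=N\setminus\{x_2,\ldots,x_{2n}\}=\{w_1,w_2\}$. Applying the Method with $m=2n+1$ and $z=2n+1$ (so $S_{2n+1}(z)=N$), equation~\eqref{uberLemma eq 2} turns $\alpha+\sum_{k=2}^{2n}x_k=2nx_{2n+1}$ into
\[
\alpha+(2n^2+n+1)-w_1-w_2=2nx_{2n+1}.
\]
Hence it suffices to prove: for every $\alpha\in N$ there are distinct $w_1,w_2\in N$ and some $x_{2n+1}\in N$ with $w_1+w_2+2nx_{2n+1}=\alpha+2n^2+n+1$. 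Note that $x_{2n+1}$ is unconstrained beyond lying in $N$; in particular it may coincide with one of $x_2,\ldots,x_{2n}$, which is permitted by the statement of the lemma.

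What remains is an explicit case analysis on $\alpha\in N$, best recorded as a short table. For $\alpha\in\{0,1,\ldots,2n-1\}$ I would set $x_{2n+1}=n\in N$, so that the requirement becomes $w_1+w_2=\alpha+n+1\in\{n+1,\ldots,3n\}$; this is met by $\{w_1,w_2\}=\{0,\,\alpha+n+1\}$ when $\alpha\le n-2$, by $\{1,2n-1\}$ when $\alpha=n-1$, and by $\{2n-1,\,\alpha-n+2\}$ when $n\le\alpha\le 2n-1$. For the one remaining value $\alpha=2n+1$ I would set $x_{2n+1}=n+1\in N$, leaving $w_1+w_2=n+2$, met by $\{w_1,w_2\}=\{1,n+1\}$. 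In each line one checks that $w_1\ne w_2$, that $w_1,w_2\in N$, and that $x_{2n+1}\in N$. The only real --- and wholly routine --- obstacle is this verification: because $N$ has a gap at $2n$, one must route the two complementary values around that gap (which is exactly why $\alpha=n-1$ and $\alpha=2n+1$ are split off as separate cases) while keeping them distinct. Once the table is checked, the lemma follows.
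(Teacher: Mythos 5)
Your proposal is correct and follows essentially the same route as the paper: both reduce the problem via the two-element complement $W=\{w_1,w_2\}$ of $N=S_{2n+1}(2n+1)$ to the equation $w_1+w_2+2nx_{2n+1}=\alpha+2n^2+n+1$ and then dispatch it with a short case table, splitting off $\alpha=n-1$ and $\alpha=2n+1$ to route around the missing value $2n$. Your specific choices of $\{w_1,w_2\}$ in the last two cases differ slightly from the paper's (which uses $\{\alpha-n,2n+1\}$ and $\{n+1,2n+1\}$ with $x_{2n+1}=n$), but they check out just as well.
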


\begin{proof}
In equation \eqref{uberLemma eq 2}, let $m=2n+1$, $z=2n+1$ and $W=\{a,b\}$. Then,
we obtain $\alpha=2nx_{2n+1}+a+b-2n^2-n-1$.

Notice that $x_{2n+1}$ does not necessarily have to be distinct from $a$
and $b$. We display the solutions for $0\leq \alpha\leq n-2$ in Table
\ref{tbl: Revised Lemma 1
  3.5 2 Table 1}.
\begin{table}[hbtp]
\begin{center}
\begin{tabular}{c|c|c|c|c}\hline
$a$ & $b$ & $x_{2n+1}$&\\ \hline
$0$   & $n+1+\alpha$ & $n$&$0\leq \alpha\leq n-2$\\
$1$   & $2n-1$ & $n$& $\alpha=n-1$\\
%$n$ &  $0$   & $2n+1$ & $n$&\\
$\alpha-n$   & $2n+1$ & $n$&$n\leq \alpha\leq 2n-1$\\
$n+1$  & $2n+1$ & $n$& $\alpha=2n+1$\\\hline
\end{tabular}
\end{center}
\caption{If $\alpha\in S_{2n+1}(2n+1)$}\label{tbl: Revised Lemma 1
  3.5 2 Table 1}
\end{table}
 Since we have covered all the cases when
$r\in\{0,1,\ldots,2n-1\}\cup\{2n+1\}$, the proof for Lemma \ref{Lemma 1
    3.5 2} is complete. 
\end{proof}
Now we prove Lemma \ref{Lemma 1 3.5} for the case when $m$ is odd. 
\begin{lemma}
\label{Lemma 1 3.5 3}
Given any $\alpha\in R_{2n+1}$, we can find $x_{2},x_{3},\ldots,x_{2n+1}\in
R_{2n+1}$ such that $x_{2},x_{3},\ldots,x_{2n}$ are distinct and $\alpha+\displaystyle\sum_{k=2}^{2n}{x_{k}}=2nx_{2n+1}$.
\end{lemma}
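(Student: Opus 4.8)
The plan is to exploit the block structure of $R_{2n+1}$. Writing $M := 2n^2+5n$, the set $R_{2n+1}$ is the disjoint union
\[
R_{2n+1} = N \cup \{3n+1\} \cup (M + N),
\]
since $3n+1\notin N$ (because $3n+1>2n+1$) and $3n+1<M$ for every $n\geq 1$. I would split into three cases according to which of these pieces contains $\alpha$ and reduce two of them to Lemma~\ref{Lemma 1 3.5 2}, using that $S_{2n+1}(2n+1)=N$ (which follows from Lemma~\ref{Lemma 1 2}, as the only terms of $S_{2n+1}$ that are at most $2n+1$ are $0,1,\ldots,2n-1$ and $2n+1$).

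The reduction rests on a translation trick. Suppose $(y_2,\ldots,y_{2n+1})$ solves $c+\sum_{k=2}^{2n}y_k=2n\,y_{2n+1}$ with every $y_i\in N$ and $y_2,\ldots,y_{2n}$ distinct; this is exactly what Lemma~\ref{Lemma 1 3.5 2} provides for $c\in N$, and it moreover forces $y_{2n+1}\in N$. Then $(M+y_2,\ldots,M+y_{2n+1})$ solves $(M+c)+\sum_{k=2}^{2n}x_k=2n\,x_{2n+1}$, because the left side acquires exactly $2n$ extra copies of $M$ --- one from $M+c$ (the shifted $\alpha$) and $2n-1$ from $x_2,\ldots,x_{2n}$ --- matching the $2n$ copies of $M$ produced by $2n\,x_{2n+1}$, while all the new terms lie in $M+N\subseteq R_{2n+1}$ and $x_2,\ldots,x_{2n}$ stay distinct. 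Hence the case $\alpha\in N$ is immediate from Lemma~\ref{Lemma 1 3.5 2} (its output lies in $N\subseteq R_{2n+1}$), and the case $\alpha=M+c$ with $c\in N$ follows by applying Lemma~\ref{Lemma 1 3.5 2} to $c$ and translating by $M$.

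The remaining case $\alpha=3n+1$ lies outside $N$ and so is not covered by Lemma~\ref{Lemma 1 3.5 2}; this is the step I expect to be the main obstacle, and I would settle it by an explicit construction: take $\{x_2,\ldots,x_{2n}\}=N\setminus\{0,2\}$, a set of $2n-1$ distinct elements of $R_{2n+1}$, and $x_{2n+1}=n+2$. Since $\sum_{w\in N}w=2n^2+n+1$, the left-hand side becomes $(3n+1)+(2n^2+n-1)=2n^2+4n=2n(n+2)$, so the equation holds, and $n+2\leq 2n-1$ because $n>3$, whence $x_{2n+1}=n+2\in N\subseteq R_{2n+1}$. Combining the three cases proves Lemma~\ref{Lemma 1 3.5 3}, and together with Lemma~\ref{Lemma 1 3.5 1} for the even case this completes Lemma~\ref{Lemma 1 3.5}. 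The only delicate points are the elementary checks that the three pieces of $R_{2n+1}$ are disjoint and exhaust it, that each constructed term genuinely lies in $R_{2n+1}$ (in particular $n+2\in N$, which uses $n>3$), and that $x_2,\ldots,x_{2n}$ are pairwise distinct --- the arithmetic itself is routine.
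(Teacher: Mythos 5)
Your argument for $n>3$ is correct and coincides with the paper's own proof: the same decomposition of $R_{2n+1}$ into $N$, $\{3n+1\}$ and $2n^2+5n+N$, the same reduction of the first and third pieces to Lemma~\ref{Lemma 1 3.5 2} (the third by translating every term by $2n^2+5n$, which balances because the left-hand side has exactly $2n$ summands), and a hand-made solution for $\alpha=3n+1$. Your construction for that last case checks out --- $\sum_{w\in N}w=2n^2+n+1$, so $(3n+1)+(2n^2+n-1)=2n(n+2)$ and $n+2\in N$ when $n>3$ --- and differs from the paper's choice ($x_2=0$, $x_k=k-1$ for $3\le k\le 2n$, $x_{2n+1}=n+1$) only in the particular values.

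The gap is the small cases. The lemma also has to hold for $n=1,2,3$, since Lemma~\ref{Lemma 1 3.5} is invoked for every odd $m\ge 3$ in the proof of Theorem~\ref{theTheorem}, and for those $n$ the decomposition you start from is not the definition of $R_{2n+1}$: Table~\ref{tbl: definitionTable} gives $R_5=\{0,1,2,3,5,7,13,26,27,28,29,31\}$, whose upper block sits at $26+N$ rather than $18+N$ and which contains the extra element $13$, and gives $R_7$ as the general pattern with $40$ removed. The paper settles these by direct finite verification (for instance $7+0+2+3=4\cdot3$ and $13+3+5+7=4\cdot7$ when $n=2$, plus a short table for $33\le\alpha\le38$ when $n=3$); you would need to append the analogous checks to make the proof complete.
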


\begin{proof}
First we prove this for when $n>3$ and then deal with the special cases
when $n\leq 3$. 

If $n>3$ and $\alpha\in S_{2n+1}(2n+1)$, by Lemma \ref{Lemma
  1 3.5 2}, we can select $x_{2},x_{3},\ldots,x_{2n+1}\in S_{2n+1}(2n+1)$ to satisfy the lemma. 

Similarly, if
$\alpha\in\{2n^2+5n+c: 0\leq c\leq 2n-1\}\cup\{2n^2+7n+1\}$, we see
this is the same set as $S_{2n+1}(2n+1)$ with $2n^2+5n$
added to each element. Therefore, also by Lemma \ref{Lemma 1 3.5 2},we can select $x_{2},x_{3},\ldots,x_{2n+1}$ from the set
$\alpha\in\{2n^2+5n+c: 0\leq c\leq 2n-1\}\cup\{2n^2+7n+1\}$ to satisfy the lemma. 

If $\alpha=3n+1$, then let $x_{2}=0$, $x_{k}=k-1$ for all $3\leq k\leq
2n$ and $x_{2n+1}=n+1$. Then, $\alpha+\displaystyle\sum_{k=3}^{2n}{x_{k}}=2nx_{2n+1}$.

\begin{comment}
\begin{align*}
r+\sum_{k=3}^{2n}{r_{k}}%&=3n+1+\sum_{k=2}^{2n-1}{k}\\
%&=3n+1+\frac{(2n+1)(2n-2)}{2}\\
%&=3n+1+(2n+1)(n-1)\\
%&=2n^2+2n\\
%&=2n(n+1)\\
&=2nr_{2n+1}
\end{align*}
\end{comment}

If $n=1$, set $x_{2}=x_{3}=0$.

If $n=2$, then the cases for when $\alpha\in\{0,1,2,3,5\}$ and
  $\alpha\in\{26,27,28,29,31\}$ are covered in Lemma \ref{Lemma 1 3.5 2}. If
  $\alpha=3n+1=7$, $7+0+2+3=4\cdot3$. If $\alpha=13$, $13+3+5+7=4\cdot7$. 

If $n=3$, then the only difference between $R_{7}$ and the general definition
for $R_{2n+1}$ when $n>3$ is the the missing 40, so we only need to
consider if $\alpha\in\{33,34,35,36,37,38\}$. The case for when $\alpha\in\{0,1,2,3,4,5,7\}$ is covered in
Lemma \ref{Lemma 1 3.5 2} and if $\alpha=10$, $10+0+2+3+4+5=6\cdot4$. 

The solutions for when $33\leq \alpha\leq 38$ are presented in
table \ref{tbl: Lemma 1 3.5 3 Table 1} below. 
\begin{table}[hbtp]
\begin{center}
\begin{tabular}{c|c|c}\hline
$\alpha$ & $\{x_{2},x_{3},x_{4},x_{5},x_{6}\}$ & $x_{7}$\\ \hline
33 &  $\{10,7,5,4,1\}$  & $10$\\
34 &  $\{10,7,5,4,0\}$  & $10$\\
35 &  $\{10,7,5,3,0\}$  & $10$\\
36 & $\{10,7,5,2,0\}$ & $10$\\
37 & $\{10,7,5,1,0\}$ & $10$\\
38 &  $\{10,7,4,1,0\}$  & $10$\\\hline
\end{tabular}
\end{center}
\caption{If $33\leq \alpha\leq 38$ and $n=3$}\label{tbl: Lemma 1
  3.5 3 Table 1}
\end{table}

Since we have covered all the cases when $m=2n+1$ is odd, the proof for
Lemma \ref{Lemma 1 3.5 3} is complete. 
\end{proof}

%\begin{singlespace}
%\include{biblioINTEGERS2}
%\end{singlespace}
\bibliography{biblioIntegers2}
\bibliographystyle{siam}

\end{document}